\theoremstyle{plain}
\newtheorem{lem}{Lemma}[section]
\newtheorem{thm}{Theorem}
\newtheorem{prop}{Proposition}
\newtheorem{cor}{Corollary}[section]
\newtheorem*{thm*}{Theorem}
\theoremstyle{definition}
\newtheorem{defn}{Definition}
\theoremstyle{remark}
\newtheorem*{rem}{Remark}
\newcommand{\N}{\mathbb{N}}
\newcommand{\R}{\mathbb{R}}
\newcommand{\C}{\mathbb{C}}
\newcommand{\Cc}{\mathcal{C}^{\infty}(I^2)}
\newcommand{\Cb}{\overline{\mathcal{C}^{\infty}(I^2)}}
\newcommand{\D}{\mathcal{D}} 
\newcommand*\bigcdot{\mathpalette\bigcdot@{.5}}
\newcommand*\bigcdot@[2]{\mathbin{\vcenter{\hbox{\scalebox{#2}{$\m@th#1\bullet$}}}}}
\newcommand\ackname{Acknowledgements.}
  \newenvironment{acknow}{%
      \if@twocolumn
        \section*{\abstractname}%
      \else
        \small
          {\bfseries \ackname}%
      \fi}
      {\if@twocolumn\else\endquotation\fi}
\journal{Journal of Mathematical Analysis and Applications}
\begin{document}

\begin{frontmatter}

\title{A Fr\'{e}chet Lie group on distributions}
\author[1]{Manon Ryckebusch\footnote{Corresponding author, \texttt{manon.ryckebusch@univ-littoral.fr}}}
\author[1]{Abderrahman Bouhamidi}
\author[1]{Pierre-Louis Giscard}

\affiliation[1]{organization={Univ. Littoral C\^{o}te d'Opale, UR 2597, LMPA, Laboratoire de Math\'ematiques Pures et Appliqu\'ees Joseph Liouville}, city={Calais}, postcode={62100}, country={France}}

\begin{abstract}
Solving non-autonomous systems of ordinary differential equations leads to consider a new product of bivariate distributions called the $\star$~product in the literature.
	This product, distinct from the convolution product, has recently been used to establish structural results concerning non-autonomous differential systems, yet its formal underpinnings remain unclear.
 We demonstrate that it is well-defined on the weak closure of the space of smooth functions on a compact subset of $\mathbb{R}^2$. We establish that a subset of this weak closure has the structure of a Fr\'{e}chet space $\mathcal{D}$. The $\star$~product arises from the composition of endomorphisms of that space. Invertible elements of $\mathcal{D}$ form a dense subset of it and a Fr\'{e}chet Lie group for the operation $\star$. This product generalizes the convolution, Volterra compositions of first and second type and induces Schwartz's bracket.
 \end{abstract}

\begin{keyword}
Distribution \sep $\star$~product \sep Fr\'echet Lie groups
\MSC 46F10
\end{keyword}

\end{frontmatter}

%\normalsize

%\date{\today}
%\begin{document}
%\maketitle

%\keywords{Distributions, $\star$~product, Fr\'{e}chet Lie groups.}

%\msc{46F10}

	\section{Introduction}
    Laurent Schwartz introduced the theory of distributions in his landmark book on the subject, defining them as linear forms acting on spaces of functions \cite{schwartz1978}. The origins of this theory can be traced to the work of pioneers such as Heaviside \cite{Heaviside1893, Heaviside1893b}, Volterra \cite{Volterra1916, volterra1924}, and others \cite{Peres1,Peres2,Peres3, Mikusiski1948SurLM, Mikusinski1950}, whose contributions laid the groundwork in the decades preceding Schwartz’s breakthrough. Meanwhile, Mikusi\'nski developed an alternative, sequence-based approach to distributions, defining them as limits of sequences of integral operators acting on functions, and relying on weak convergence \cite{Mikusiski1948SurLM}.

A major advantage of Mikusi\'nski's approach is its applicability beyond function spaces, making it possible to define a composition on more general spaces. This approach thus offers a potential route to defining a product on distributions—an issue first raised by Schwartz, who observed that such a product could not be meaningfully defined without violating one or more of the intuitive assumptions about what the product should entail \cite{MR0064324}. These assumptions include the requirement that the constant function 1 be the unit element, that the product must align with the classical product of functions, and that the set on which the product is defined should include the space of continuous real functions of a real variable as a linear subspace.

Several tools have been proposed to resolve or circumvent these issues, including Colombeau algebras \cite{Colombeau}, tensor products, and the convolution of distributions. Notably, certain special Colombeau algebras can be viewed as an extension of Mikusi\'nski's sequential approach to distributions \cite{SpecialCo}. Mikusi\'nski himself developed algebraic structures for operators acting on spaces \cite{Mikusinski1950}, which have since been generalized \cite{MikOperationalCalc}. An important earlier theory, stemming from the study of integral equations by Fredholm, laid the foundation for the modern concept of Fredholm operators \cite{murphy2014c}.

Returning to Mikusi\'nski's foundational work, we demonstrate that a compositional product of distributions in two variables exists. In specific cases, this product, denoted by a $\star$, reduces to convolution; in others, it corresponds to Volterra composition; and in yet others, it induces the classical product of smooth functions of a single variable. This product also relates to Schwartz's bracket and can be viewed as a continuous version of the matrix product for infinite triangular matrices. Restrictions of this product to piecewise smooth functions have been rediscovered and applied in physics \cite{Champs, Giscard2015}. However, in none of these cases has the mathematical well-definedness of the product been rigorously verified, nor have its properties been fully explored.

Despite this, applications of the $\star$ product are already in use. For instance, it has been employed in path sum methods for solving Volterra equations \cite{GiscardVolt}, in the context of non-autonomous differential equations in physics \cite{GiscardTamar}, and in coupled systems of such equations, particularly in the study of Hamiltonians from NMR spin systems in chemistry \cite{NMR, Foroozandeh, bonhomme2023newfastnumericalmethod}. In turn, the numerical simulation of such systems has led to the development of $\star$-product-based Lanczos methods \cite{Giscard_2022, Cipolla2023, GiscardPozza20201}, numerical approximations of $\star$ products \cite{pozza2024, Pozza2023_2, PozzaBuggenhout, Pozza2022ANM, pozza2023new, DD21}, and the first theoretical investigations of the product and its inverses \cite{GiscardPozzaConvergence, GiscardPozza20201, Pozza2023}, culminating in the present work.

In this context, we provide the first rigorous investigation of the topological and algebraic structures associated with this product. Specifically, we explore a Fr\'{e}chet space of distributions and its relationship with the Fr\'{e}chet space of smooth functions, in line with Schwartz's work \cite{schwartz1978}. We also investigate a Lie group structure, analogous to the one that has been studied for the convolution product in linear algebraic groups \cite{jantzen2003}.

The article is structured as follows: In Section \ref{StarDef}, we demonstrate that the $\star$-product is well-defined on the weak closure of the space of smooth functions over a compact subset of $\mathbb{R}^2$, as per Mikusi\'nski’s framework. We then relate this product to existing products. In Section \ref{FLgroup}, we show that the $\star$ product induces a Fr\'{e}chet Lie group structure on a specific set of distributions.

   \section{The $\star$~product} \label{StarDef}
\subsection{Context and definitions}
	In their works on integral equations and permutable functions, Volterra \cite{Volterra1916} and Volterra and P\'er\`es \cite{volterra1924} defined a product of two smooth functions $\tilde{f}$ and $\tilde{g}$, now called the Volterra composition of the first kind, as follows:
\begin{equation}\label{VolterraCompoDef}
 \left(\tilde{f} \star_V \tilde{g}\right)(x,y) := \int_{y}^{x} \tilde{f}(x,\tau) \tilde{g}(\tau,y) d\tau.
 \end{equation}
 This product emerges naturally from the Picard iteration solution to Volterra integral equations \cite{Picard1893} and is frequently rediscovered from there, see e.g. \cite{Champs}, while a partial extension to distributions first appeared in a mathematical-physics context \cite{Giscard2015, GiscardPozza20201}. Working in the 1910s and 1920s, before the advent of distribution and Dirac seminal works, Volterra and P\'er\`es noted that defined as in Eq.~(\ref{VolterraCompoDef}), the product lacks a unit element and suffers from subsequent issues regarding inversion and more \cite{volterra1924}. Upon inspection of Eq.~(\ref{VolterraCompoDef}) it is intuitive to remedy the problem by proposing the Dirac delta distribution as unit for the Volterra composition. In order to formalize this observation, we are forced to consider a more general product.
    \begin{defn}
    Let $I\subset \mathbb{R}$ be compact. Define the $\star_I$ product of $f$ with $g$ by 
	\begin{equation}\label{DefStarI}
	\left(f \star_I g \right) (x,y) := \int_I f(x,\tau) g(\tau,y) d\tau,
	\end{equation}
    where  
    and $f$ and $g$ might be more general objects than smooth functions, objects which we identify precisely in Theorem~\ref{WellDefined}.
    \end{defn}
     In particular we note that for $f(x,y)=\tilde{f}(x,y)\Theta(x-y)$ and $g(x,y)=\tilde{g}(x,y)\Theta(x-y)$ with $\Theta(.)$ the Heaviside Theta function under the convention $\Theta(0)=1$\label{Theta0} and $\tilde{f}$ and $\tilde{g}$ smooth functions over $I^2$, then provided $[y,x]\subset I$,
    \begin{equation}\label{Volt1}
	\left(f \star_I g \right) (x,y) = \int_y^x \tilde{f}(x,\tau) \tilde{g}(\tau,y) d\tau\,\Theta(x-y)=(\tilde{f}\star_V\tilde{g})(x,y)~\Theta(x-y),
	\end{equation}
    that is, we recover the Volterra composition of the smooth functions $\tilde{f}$ and $\tilde{g}$.
    The first task is to identify the objects on which the $\star_I$~product is well defined. 
	
\subsection{The $\star_I$~product is well-defined on $\Cb$}\label{WellDefSec}
 \begin{defn}\label{DefCc}
  Let $I$ be a compact subspace of $\mathbb{R}$. We write $\mathcal{C}^\infty(I^2)$ for the set of functions which are defined, continuous and all of whose derivatives are defined and continuous on an open set $\Omega\subset \mathbb{R}^2$ with $I^2\subset \Omega$.
 \end{defn}
 \begin{thm}\label{WellDefined}
The $\star_I$~product is well defined on the weak closure (see \ref{weakconvDef} below) $\Cb$ of $\Cc$, that is, for any $f,g\in \Cb$, $f\star_I g$ exists and is in $\Cb$. Furthermore the $\star_I$~product is associative over $\Cb$.
 \end{thm}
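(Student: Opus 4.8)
Conceptually, the statement says that the correspondence $K\mapsto T_K$ sending a smooth kernel $K\in\Cc$ to the integral operator $(T_K\phi)(x)=\int_I K(x,\tau)\phi(\tau)\,d\tau$ turns $\star_I$ into composition of operators, $T_{f\star_I g}=T_f\circ T_g$, and the goal is to push this through to $\Cb$. I would first dispatch the smooth case: if $f,g\in\Cc$ then the integrand $(x,\tau,y)\mapsto f(x,\tau)g(\tau,y)$ is smooth on a neighbourhood of the compact set $I^3$, so differentiation under the integral sign gives $f\star_I g\in\Cc$ together with estimates of each seminorm of $f\star_I g$ by products of seminorms of $f$ and $g$; thus $\star_I:\Cc\times\Cc\to\Cc$ is a continuous bilinear map for the Fr\'echet topology of $\Cc$. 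Associativity on $\Cc$ is then Fubini's theorem: for $f,g,k\in\Cc$ both $((f\star_I g)\star_I k)(x,y)$ and $(f\star_I(g\star_I k))(x,y)$ equal the absolutely convergent triple integral $\int_{I^2}f(x,\sigma)g(\sigma,\tau)k(\tau,y)\,d\sigma\,d\tau$.

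To extend to $\Cb$, fix $f,g\in\Cb$ and pick sequences $(f_n),(g_n)$ in $\Cc$ converging to $f,g$ in the sense of \ref{weakconvDef}; by the smooth case each $h_n:=f_n\star_I g_n$ lies in $\Cc$. I would define $f\star_I g$ as the limit of $(h_n)$ and then establish: (i) this limit exists in the relevant weak sense; (ii) it is independent of the chosen approximating sequences; (iii) it belongs to $\Cb$. Granting (i)--(ii), point (iii) is essentially free, since $f\star_I g$ is a weak limit of elements of $\Cc$ and, $\Cc$ being metrizable, a diagonal extraction yields a single sequence in $\Cc$ realising it, so it lies in $\Cb$.

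The substance is (i)--(ii). For $\psi\in\Cc$ write
\[
\langle h_n,\psi\rangle=\int_{I^3}f_n(x,\tau)\,g_n(\tau,y)\,\psi(x,y)\,dx\,dy\,d\tau=\int_{I^2}f_n(x,\tau)\,\Psi_n(x,\tau)\,dx\,d\tau,\qquad \Psi_n(x,\tau):=\int_I g_n(\tau,y)\,\psi(x,y)\,dy .
\]
Here $\Psi_n$ is the pairing of $g_n$ against $\psi$ in the second variable only, and by the definition of the weak closure (\ref{weakconvDef}) the corresponding object $\Psi$ built from $g$ and $\psi$ is again a legitimate smooth test function on $I^2$ with $\Psi_n\to\Psi$ --- crucially in the Fr\'echet topology of $\Cc$, not merely weakly. (One route to this upgrade is Banach--Steinhaus: a weakly convergent sequence in the dual of the Fr\'echet space $\Cc$ is equicontinuous, hence converges uniformly, with all derivatives, on bounded sets.) Splitting
\[
\langle h_n,\psi\rangle=\int_{I^2}f_n\,\Psi\,dx\,d\tau+\int_{I^2}f_n\,(\Psi_n-\Psi)\,dx\,d\tau ,
\]
the first term tends to $\langle f,\Psi\rangle$ since $f_n\to f$ weakly, while the second tends to $0$ since $(f_n)$ is weak-$*$ bounded and $\Psi_n-\Psi\to0$ strongly in $\Cc$. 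Thus $\langle h_n,\psi\rangle$ converges for every $\psi$, and the limiting functional $\psi\mapsto\langle f,\Psi\rangle$, with $\Psi$ the second-variable pairing of $g$ against $\psi$, depends only on $f$ and $g$, which gives both existence and sequence-independence. Associativity on $\Cb$ then follows by the same contraction applied twice: for $f,g,k\in\Cb$ with approximating sequences, $f_n\star_I g_n\star_I k_n$ is unambiguous by the smooth case, and contracting the two intermediate variables in the two possible orders identifies $\lim_n(f_n\star_I g_n)\star_I k_n$ with $\lim_n f_n\star_I(g_n\star_I k_n)$.

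I expect the main obstacle to be precisely the passage inside the third step: weak limits do not commute with products, so one must genuinely exploit that $\star_I$ integrates over the whole compact $I$ in the intermediate variable, and one must verify the two facts that the definition in \ref{weakconvDef} is designed to guarantee --- that the partial pairing of an element of $\Cb$ against a smooth test function is itself a smooth object, so that $\langle f,\Psi\rangle$ is meaningful, and that the convergence there is strong enough to upgrade $\Psi_n\to\Psi$ from weak to strong in $\Cc$. Everything else (the smooth case, Fubini, the diagonal argument, and the deduction of associativity) is routine once this continuity statement is established.
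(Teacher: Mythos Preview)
Your argument rests on a misreading of Definition~\ref{weakconvDef}. The paper's weak convergence is \emph{not} the distributional pairing $\langle f_n,\psi\rangle=\int_{I^2}f_n\psi$: a sequence $(f_n)$ converges weakly when, for every $\varphi\in\Cc$, the \emph{function} $f_n\star\varphi$ converges in $\Cc$ (and the underlying convergence on $\Cc$ is uniform convergence, not the full Fr\'echet topology). The task is therefore to show that $(f_n\star g_n)\star\varphi$ converges uniformly on $I^2$ for each $\varphi$, not that the scalar $\langle f_n\star g_n,\psi\rangle$ converges. Your triple-integral rewriting, the auxiliary $\Psi_n$, and the appeal to Banach--Steinhaus for weak-$*$ equicontinuity all target the Schwartz bracket, so as written the argument proves the wrong statement; in particular there is no weak-$*$ topology in play from which to extract equicontinuity, and your claimed upgrade of $\Psi_n\to\Psi$ to Fr\'echet convergence is not supplied by the paper's definition.

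The paper proceeds instead through Mikusi\'nski's abstract scheme: one checks that $\star$ is an internal law on $\Cc$, that $\Cc$ is \emph{total} (so $f\star\varphi=g\star\varphi$ for all $\varphi$ forces $f=g$), and that $\star$ is \emph{regular}, meaning weak convergence of $(f_n)$ and $(g_n)$ forces weak convergence of $(f_n\star g_n)$ to a limit depending only on the weak limits of the factors; a general theorem of Mikusi\'nski then delivers well-definedness and associativity on $\Cb$ simultaneously. The regularity proof does use a split close to yours, namely $f_n\star(g_n\star\varphi)-f\star(g\star\varphi)=f_n\star\big((g_n-g)\star\varphi\big)+(f_n-f)\star(g\star\varphi)$, but the cross term is controlled by the elementary estimate $\big|(f_n\star h)(x,y)\big|\le\|h\|_\infty\,\big|(f_n\star 1)(x)\big|$ and the boundedness of $\|f_n\star 1\|_\infty$ coming from weak convergence of $(f_n)$ applied to the constant test function $1$. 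Your structural instinct---split, use weak convergence on one piece and a uniform bound on the other---is sound; it simply has to be carried out for the functions $(f_n\star g_n)\star\varphi$ rather than for scalar pairings.
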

 We can be more explicit as to which objects $\Cb$ comprises. 
\begin{defn}\label{DefD} Let $\D$ be the set of objects $d$ of the form 
	\begin{equation*} 
d(x,y)=\tilde{d}(x,y)\Theta(x-y)+\sum_{i=0}^{+\infty} \tilde{d}_{i}(x,y) \delta^{(i)}(x-y),
	\end{equation*}
	where $(x,y)\in I^2$, $\tilde{d},\,\tilde{d}_{i}\in \Cc$ are complex-valued functions and $\delta^{(i)}(x-y)$ is the $i$th Dirac delta derivative in the sense of distributions evaluated in $x-y$. 
 \end{defn}
\begin{rem}
The definition of $\mathcal{D}$ given above allows for infinitely many nonzero terms to be present in the sum. The relevant notion of convergence is presented in Section~\ref{FrechetD}.
\end{rem}
 \begin{cor}\label{DinCb}
We have $\D\subset \Cb$ and for $x,y,\in I$, $\delta(x-y)$ is the unit of the $\star_I$ product.
 \end{cor}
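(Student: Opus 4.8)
The plan is to establish the two claims separately: the inclusion $\D\subset\Cb$ by approximating weakly the elementary objects that generate $\D$, and the unit property by combining the sifting property of $\delta$ on smooth functions with the weak density of $\Cc$ in $\Cb$ and the separate weak continuity of $\star_I$ furnished by the proof of Theorem~\ref{WellDefined}.

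For $\D\subset\Cb$, fix a mollifier, i.e.\ a nonnegative $\chi\in\mathcal{C}^\infty(\R)$ supported in $[-1,1]$ with $\int_\R\chi=1$, and set $\chi_\varepsilon(t):=\varepsilon^{-1}\chi(t/\varepsilon)$ and $H_\varepsilon(t):=\int_{-\infty}^{t}\chi_\varepsilon(s)\,ds$. For every $\tilde d,\tilde d_i\in\Cc$ the maps $(x,y)\mapsto\tilde d(x,y)H_\varepsilon(x-y)$ and $(x,y)\mapsto\tilde d_i(x,y)\chi_\varepsilon^{(i)}(x-y)$ lie in $\Cc$, and as $\varepsilon\to0$ they converge weakly to $\tilde d(x,y)\Theta(x-y)$ and to $\tilde d_i(x,y)\delta^{(i)}(x-y)$, respectively; the verification uses only that $H_\varepsilon\to\Theta$ and $\chi_\varepsilon^{(i)}\to\delta^{(i)}$ weakly, that multiplication by a fixed element of $\Cc$ is weakly continuous (it merely transfers onto the test object after the change of variables $s=x-y$), and that altering $\Theta$ on the measure-zero diagonal does not affect weak limits. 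Hence each such building block lies in $\Cb$. Since $\Cb$ is the weak closure of the linear space $\Cc$, it is itself a linear space, so every finite partial sum $\tilde d(x,y)\Theta(x-y)+\sum_{i=0}^{n}\tilde d_i(x,y)\delta^{(i)}(x-y)$ belongs to $\Cb$. Finally, for $d\in\D$ as in Definition~\ref{DefD} these partial sums converge to $d$ in the topology of $\D$ built in Section~\ref{FrechetD}; since that convergence implies weak convergence and $\Cb$ is weakly closed, $d\in\Cb$.

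For the unit property, note that $\delta(x-y)\in\D\subset\Cb$, so Theorem~\ref{WellDefined} already guarantees that $\delta(x-y)\star_I f$ and $f\star_I\delta(x-y)$ exist in $\Cb$ for every $f\in\Cb$. When $f\in\Cc$, the sifting property gives $\bigl(\delta(\cdot-\cdot)\star_I f\bigr)(x,y)=\int_I\delta(x-\tau)f(\tau,y)\,d\tau=f(x,y)$ and symmetrically $\bigl(f\star_I\delta(\cdot-\cdot)\bigr)(x,y)=f(x,y)$ for $x,y$ in the interior of $I$, the endpoints being covered by the convention $\Theta(0)=1$ fixed on page~\pageref{Theta0}, which is exactly what makes a delta sitting at an endpoint of $I$ contribute fully. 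For general $f\in\Cb$, choose $f_n\in\Cc$ with $f_n\to f$ weakly; separate weak continuity of $\star_I$ (established within the proof of Theorem~\ref{WellDefined}), applied with the fixed factor $\delta(x-y)$, then yields $\delta(x-y)\star_I f=\lim_n\delta(x-y)\star_I f_n=\lim_n f_n=f$ and likewise $f\star_I\delta(x-y)=f$, so $\delta(x-y)$ is a two-sided unit.

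The mollifier estimates and the change of variables are routine. The step I expect to carry the real weight is the coordination with Section~\ref{FrechetD}: one needs the topology of $\D$ to be at least as strong as weak convergence so that the closedness of $\Cb$ can be invoked on the partial sums, and, for the unit, the argument leans on the separate weak continuity of $\star_I$. Should that continuity not be isolated as a standalone statement, a self-contained alternative is to check $\delta(x-y)\star_I d=d=d\star_I\delta(x-y)$ directly on the generators $\tilde d(x,y)\Theta(x-y)$ and $\delta^{(i)}(x-y)$ of $\D$ by the same sifting computation, and then to pass to infinite sums using only the $\D$-topology, avoiding any appeal to continuity on all of $\Cb$.
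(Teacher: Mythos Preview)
Your approach coincides with the paper's: mollifier sequences for $\Theta$ and each $\delta^{(i)}$, linearity and pointwise multiplication by $\Cc$-coefficients for finite combinations, and regularity to push the unit identity from $\Cc$ to $\Cb$. Two remarks are worth making.

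First, in the Mikusi\'nski framework of Definition~\ref{weakconvDef}, $\delta\star_I f$ is \emph{by definition} $\lim_{\varepsilon\to0}\chi_\varepsilon\star_I f$, so invoking ``the sifting property'' to evaluate $\int_I\delta(x-\tau)f(\tau,y)\,d\tau$ is circular: the content of that identity is precisely the mollifier estimate. The paper does not separate the two steps; it proves the uniform bound $\|D_n\star\varphi-\varphi\|_\infty\leq\|\partial_x\varphi\|_\infty\,|I_n|\to0$ directly (with a careful treatment of the boundary of $I$ via the convention $\Theta(0)=1$), and that single computation delivers both $\delta\in\Cb$ and $\delta\star\varphi=\varphi$ simultaneously. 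Your argument becomes complete once you replace the sifting shortcut by this estimate, which is standard.

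Second, your passage to infinite sums via the Fr\'echet topology of Section~\ref{FrechetD} is actually more explicit than the paper's, which closes with only ``multiplying point-wise by functions of $\Cc$ as needed'' and does not separately address infinite-order elements. You are right to flag this as the step carrying weight: neither your sketch nor the paper's fully verifies that convergence of the partial sums $d|_k\to d$ in the $\D$-metric forces $\|(d-d|_k)\star\varphi\|_\infty\to0$ for every $\varphi\in\Cc$, which is what weak convergence in the sense of Definition~\ref{weakconvDef} demands.
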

Theorem~\ref{WellDefined} together with the above corollary imply a convenient result for $\D$.
\begin{cor}\label{CorStarR}
The $\star_\mathbb{R}$~product is a well-defined associative product on $\D$, that is, for any $d,e\in \D$, 
\[
(d\star_\mathbb{R} e)(x,y):=\int_{-\infty}^{\infty}d(x,\tau)e(\tau,y)d\tau,
\]
is an element of $\mathcal{D}$.
\end{cor}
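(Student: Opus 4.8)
The plan is to derive everything from Theorem~\ref{WellDefined} and Corollary~\ref{DinCb} by reducing the integral over $\mathbb{R}$ to an integral over the compact set $I$. The key structural observation is that for $d,e\in\D$ of the form given in Definition~\ref{DefD}, the integrand $d(x,\tau)e(\tau,y)$ is, as a distribution in $\tau$, supported inside $I$ once we restrict $x,y\in I$: indeed each summand of $d(x,\tau)$ carries either a factor $\Theta(x-\tau)$, which for fixed $x\in I$ is supported in $\tau\le x$, or a factor $\delta^{(i)}(x-\tau)$, supported at $\tau=x\in I$; and symmetrically each summand of $e(\tau,y)$ carries $\Theta(\tau-y)$ or $\delta^{(j)}(\tau-y)$. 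Hence every product term is supported in $\{y\le\tau\le x\}\cap I^2$-slices, which lie in $I$. Therefore the formal expression $\int_{-\infty}^{\infty}d(x,\tau)e(\tau,y)\,d\tau$ coincides with $\int_I d(x,\tau)e(\tau,y)\,d\tau=(d\star_I e)(x,y)$, and the latter makes sense and lies in $\Cb$ by Theorem~\ref{WellDefined} (using $\D\subset\Cb$ from Corollary~\ref{DinCb}).

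First I would make precise the claim that $\star_\mathbb{R}$ restricted to such $d,e$ equals $\star_I$. This requires checking that the pairings against a test function $\varphi(\tau)$ agree: for the $\Theta\cdot\Theta$ cross term one gets the Volterra-type integral $\int_y^x\tilde d(x,\tau)\tilde d'_{}(\tau,y)\,d\tau$ as in Eq.~(\ref{Volt1}), which only involves $\tau\in[y,x]\subseteq I$; for terms involving a $\delta^{(i)}(x-\tau)$ the integration localizes at $\tau=x$; for the remaining mixed $\Theta$–$\delta$ terms one integrates by parts finitely many times, again localizing inside $I$. In each case no mass escapes $I$, so extending the domain of integration from $I$ to $\mathbb{R}$ changes nothing. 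Second, I would invoke Corollary~\ref{DinCb} to note $\D\subset\Cb$, so Theorem~\ref{WellDefined} applies and gives $d\star_I e\in\Cb$; then I would argue that in fact $d\star_I e\in\D$ by exhibiting its decomposition --- this is essentially the content already used to prove Corollary~\ref{DinCb} and amounts to computing the $\Theta\cdot\Theta$, $\Theta\cdot\delta^{(j)}$, $\delta^{(i)}\cdot\Theta$, and $\delta^{(i)}\cdot\delta^{(j)}$ products and collecting them (the first yields a $\Theta$-term, the Dirac-against-Dirac terms yield finite sums of $\delta^{(k)}$-terms with smooth coefficients, etc.). Finally, associativity of $\star_\mathbb{R}$ on $\D$ follows from associativity of $\star_I$ on $\Cb$ (Theorem~\ref{WellDefined}) together with the identification of the two products on $\D$.

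The main obstacle I anticipate is the bookkeeping in the step showing $d\star_I e\in\D$ when the defining sums are infinite: one must verify that the resulting series (in particular the coefficient functions multiplying each $\delta^{(k)}$, which receive contributions from infinitely many pairs $(i,j)$) converges in the sense made precise in Section~\ref{FrechetD}. I would handle this by citing the convergence framework of that section and the estimates already established there, rather than re-deriving them; the finitely-supported case is immediate and the general case is a limiting argument using continuity of $\star_I$ on $\Cb$. A secondary technical point is justifying the integration-by-parts manipulations for the $\Theta$–$\delta^{(i)}$ cross terms at the endpoint $\tau=x$ (where $\Theta(x-\tau)$ has its jump), which is exactly where the convention $\Theta(0)=1$ fixed on page~\pageref{Theta0} is used; this is routine once that convention is in force.
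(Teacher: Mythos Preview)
Your proposal is correct and follows essentially the same route as the paper: both arguments hinge on the observation that for $d,e\in\D$ and $(x,y)$ fixed, the integrand $d(x,\tau)e(\tau,y)$ has compact $\tau$-support (carried by the $\Theta$ and $\delta^{(i)}$ factors), so that $d\star_{\mathbb{R}}e=d\star_I e$ for a suitable compact $I$, whence Theorem~\ref{WellDefined} and Corollary~\ref{DinCb} apply. The paper's proof is in fact terser than yours---it does not spell out the term-by-term verification that the result lies in $\D$ rather than merely in $\Cb$, nor does it discuss the infinite-sum bookkeeping you flag---so your added detail on the $\Theta\!\cdot\!\Theta$, $\Theta\!\cdot\!\delta^{(j)}$, $\delta^{(i)}\!\cdot\!\delta^{(j)}$ cross-terms and the convergence issue is a genuine improvement in rigor over the paper's own argument; the paper does, however, include one point you omit, namely a brief remark that $\star_{\mathbb{R}}$ fails to be regular on all of $\mathcal{C}^\infty(\mathbb{R}^2)$ (via $1\star_{\mathbb{R}}1$), motivating the restriction to $\D$.
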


\begin{rem}
Elements of $\mathcal{D}$ can be called ``distributions'' but with some care as to what is meant. Lemmas~\ref{intern}, \ref{lemTOT} and \ref{lemReg} presented below establish that the $\star_I$~product corresponds to the composition of endomorphisms of $\Cb$ (see also Theorem~\ref{AutThm}). Elements of $\mathcal{D}$ are such endomorphisms and not linear forms, that is, in that sense, not distributions. 
At the same time, we show in Section~\ref{InnOut} that the $\star_I$~product induces the one-dimensional bracket defining the action of linear forms $\mathcal{C}^\infty(I)\to \mathbb{C}$. Similarly the Schwartz bracket on two variables is induced by a four-dimensional $\star_I$~product of endomorphisms of the weak closure of $\mathcal{C}^\infty(\mathbb{R}^4)$. This allows one to use elements of $\mathcal{D}$ as linear forms $\mathcal{C}^\infty(I^2)\to \mathbb{C}$ and so legitimately call them distributions. This umbral construction relating endomorphisms and linear forms extends to higher dimensions but is beyond the scope of the present work. When formally seeing elements of $\mathcal{D}$ as distributions, a natural candidate for the open set is the smallest open set on which all $\tilde{d}$, $\tilde{d}_i$ are defined (and smooth). 
\end{rem} 
       To prove the theorem and its corollaries, we have to delve back into the sequential approach to distributions, showing first that the $\star_I$~product is well-behaved (in ways made precise below) on $\Cc\times \Cc$ and second, that it can be exchanged with limits in products of sequences of elements of $\Cc$. In doing so we broadly follow and adapt the strategy designed by Mikusi\'nski in \cite{Mikusiski1948SurLM} to define certain operations on distributions.  
   \begin{proof}
   We begin with the necessary definitions.
	\begin{defn}Let $F$ be a set and let $\bigcdot$ be a map $F \times F \to F$, called a composition law on $F$.\footnote{In Mikusi\'nski's work such a composition law is called an internal composition law on $F$.} The set $F$ is total with respect to $\bigcdot$ if 
		$f \bigcdot \varphi$ $=$ $g \bigcdot \varphi $ for all $\varphi\in F$ implies $f=g$ for any $f$ and $g$ $\in F$.
	\end{defn}
	
We now need a notion of convergence on the set $F$. Mikusi\'nski's  original requirement was that one disposes of a map that sends certain sequences $(f_n)$ of elements of $F$ to an element $f$ of $F$, denoted $\lim f_n =f$. Following modern terminology, we shall ask that $F$ be given a convergence structure \cite[Def. 1.1.1; p. 2]{beattie2010convergence} and write $\lim$ to denote the limit of a convergent sequence of elements of $F$.

 \begin{defn}\label{weakconvDef}
Let $F$ be a set with a convergence structure, and let $\bigcdot$ be a  composition law on $F$ for which $F$ is total. The weak limit $f$ 
 of a sequence $(f_n)$ of elements of $F$, is defined as the set of all sequences of $(f_n')$ of elements of $F$ for which
\[
\forall \varphi\in F,~\lim f_n \bigcdot \varphi=~\lim f'_n \bigcdot \varphi,
\]
the limits being taken with respect to the convergence on $F$, i.e. assuming $\lim f_n \bigcdot \varphi \in F$. We define the composition of a weak limit with an element $\varphi\in F$ by
\( 
f\bigcdot\varphi:=\lim f_n\bigcdot\varphi
\), then by construction $f \bigcdot \varphi \in F$ and we  write $\widetilde{\lim}\, f_n=f$. With these constructions, we say that $F$ is endowed with the notion of weak convergence.
\end{defn}

\begin{defn}\label{regulardef}
	Let $F$ be a set endowed the notion of weak convergence as defined in Definition~\ref{weakconvDef}. Let $\bigcdot$ be a composition law on $F$ for which $F$ is total.	
  Let $(f_n)$ and $(g_n)$ be two sequences of elements of $F$. The composition is said to be regular on the weak closure $\overline{F}$ of $F$ if it satisfies the following conditions:
		\begin{itemize}
		    \item[1)] If sequences $(f_n)$ and $(g_n)$ are weakly convergent, then so is the sequence $(f_n\bigcdot g_n)_n$.
            \item[2)] If there exist weakly convergent sequences $(f_n')$ and $(g_n')$ in $F$ with $\widetilde{\lim} \, f_n = \widetilde{\lim} \, f_n'=f$ and $
		\widetilde{\lim} \, g_n = \widetilde{\lim} \, g_n'=g$, then $\widetilde{\lim}\, (f_n \bigcdot g_n) = \widetilde{\lim}\, (f_n'\bigcdot g_n')$. %= f \bigcdot g$.
		\end{itemize}
	\end{defn}

The conditions set forth by these definitions, if verified, permit the extension of the composition law to the weak closure. In order to alleviate the notation, from now on we write $\star$ for $\star_I$ wherever no confusion may arise.

 \begin{thm}[Mikusi\'nski \cite{Mikusiski1948SurLM}]\label{ThmMiku}
 Let $F$ be a set and $\bigcdot$ a composition law on $F$.
 If $\bigcdot$ is a regular operation as defined in Definition~\ref{regulardef}, then it is well defined on $\overline{F}$ so extends on it. In addition, if $\bigcdot$ is associative on $F$, so is it on its weak closure $\overline{F}$. 
 \end{thm}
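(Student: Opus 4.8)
The plan is to construct the extension directly on the equivalence classes of sequences that define $\overline F$, and then observe that associativity passes to weak limits for free. Recall from Definition~\ref{weakconvDef} that an element of $\overline F$ is a weak limit, i.e.\ the class of a weakly convergent sequence $(f_n)$ in $F$ --- one for which $\lim f_n\bigcdot\varphi$ exists in $F$ for every $\varphi\in F$ --- two such sequences being identified when $\lim f_n\bigcdot\varphi=\lim f_n'\bigcdot\varphi$ for all $\varphi$. First I would record that $F\subset\overline F$: send $f\in F$ to the class of the constant sequence $(f,f,\dots)$, which is weakly convergent because $\lim f\bigcdot\varphi=f\bigcdot\varphi\in F$ for every $\varphi$; this assignment is injective precisely because $F$ is total with respect to $\bigcdot$ (two elements of $F$ agreeing after composition with every $\varphi$ must coincide).

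Next I would define, for $f,g\in\overline F$ with weakly convergent representatives $(f_n),(g_n)$ (so $\widetilde{\lim}\,f_n=f$ and $\widetilde{\lim}\,g_n=g$),
\[
f\bigcdot g:=\widetilde{\lim}\,(f_n\bigcdot g_n).
\]
Condition~1) of Definition~\ref{regulardef} ensures $(f_n\bigcdot g_n)$ is again weakly convergent, so this weak limit exists in $\overline F$; condition~2) ensures the resulting class is independent of the chosen representatives, so $f\bigcdot g$ is well defined. Taking constant representatives on $F\times F$ gives $f_n\bigcdot g_n=f\bigcdot g$, whose weak limit is the image of $f\bigcdot g$, so the new operation genuinely extends the old one. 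Likewise, taking $g=\varphi\in F$ with a constant representative recovers $\widetilde{\lim}(f_n\bigcdot\varphi)$, the image of $\lim f_n\bigcdot\varphi=f\bigcdot\varphi$, so the extension is also consistent with the partial composition already introduced in Definition~\ref{weakconvDef}.

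For associativity I would exploit that it is a universally quantified identity already valid termwise in $F$. Given $f,g,h\in\overline F$ with representatives $(f_n),(g_n),(h_n)$, the sequence $(f_n\bigcdot g_n)$ is by construction a legitimate representative of $f\bigcdot g$, so
\[
(f\bigcdot g)\bigcdot h=\widetilde{\lim}\,\big((f_n\bigcdot g_n)\bigcdot h_n\big),\qquad f\bigcdot(g\bigcdot h)=\widetilde{\lim}\,\big(f_n\bigcdot(g_n\bigcdot h_n)\big).
\]
Since $\bigcdot$ is associative on $F$, $(f_n\bigcdot g_n)\bigcdot h_n=f_n\bigcdot(g_n\bigcdot h_n)$ for every $n$; the two sequences coincide termwise, hence define the same weak limit, and associativity on $\overline F$ follows.

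I expect the only delicate part to be the internal bookkeeping rather than any hard estimate: that the embedding $F\hookrightarrow\overline F$ is well defined and injective is exactly where totality is used, that $(f_n\bigcdot g_n)$ is a valid representative of $f\bigcdot g$ is exactly condition~1), and that the value does not depend on representatives is exactly condition~2). Once these compatibility points are in place, both the existence of the extension and the transfer of associativity are purely formal, since associativity is an identity that survives passage to weak limits verbatim.
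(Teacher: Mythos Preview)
Your argument is correct and is essentially the standard proof of this statement. Note, however, that the paper does not supply its own proof of Theorem~\ref{ThmMiku}: it is quoted as a result of Mikusi\'nski \cite{Mikusiski1948SurLM} and used as a black box in the proof of Theorem~\ref{WellDefined}. So there is nothing in the paper to compare your argument against beyond the bare statement; your write-up would serve perfectly well as the missing justification.
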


Since $\mathcal{C}^\infty(I^2)$ has a convergence structure, our strategy for the $\star$~product of Definition~\ref{DefStarI} is thus to establish that:
 \begin{itemize}
 \setlength\itemsep{.1em}
     \item[i)] It is a composition law on $\mathcal{C}^{\infty}(I^2)$.
     \item[ii)] The set $\mathcal{C}^{\infty}(I^2)$ is total relatively to $\star$.
     \item[iii)] The $\star$~product is a regular operation on $\mathcal{C}^{\infty}(I^2)$.
     \end{itemize}
     While to identify elements of the weak closure 
     $\overline{\mathcal{C}^{\infty}(I^2)}$, we will prove that:
     \begin{itemize}
      \setlength\itemsep{.1em}
     \item[iv)] $\forall f \in \{\Theta, \delta^{(i)}\}_{i\in\mathbb{N}}$ there exists a weakly convergent sequence $(f_n)$ such that: 
     \subitem a) $\forall \varphi \in \mathcal{C}^{\infty}(I^2), \lim f_n \star \varphi = f \star \varphi$
     \subitem b) $f \star \varphi \in \mathcal{C}^{\infty}(I^2)$.
 \end{itemize} 

 \begin{rem}
Using Mikusi\'nski's construction, we are allowed to consider compositions of limits of sequences of elements of $F$ with elements of $F$--even if these limits lie outside of $F$ itself--on the condition that the result of the product be in $F$ (see Definition~\ref{weakconvDef}). If a product of a limit with an element of $F$ is found to be outside of $F$ this indicates one or more issues with the proposed definitions for $F$ and the composition. Observe now that for $\varphi\in  \mathcal{C}^{\infty}_c(\R^2)$ we may have $\Theta \star \varphi \notin \mathcal{C}^{\infty}_c(\R^2)$. 
This shows that we cannot use the usual space $F=\mathcal{C}^{\infty}_c(\R^2)$ in Mikusi\'nski's approach to formalize the $\star$~product. For this reason we must adopt $F=\mathcal{C}^\infty(I^2)$.
 \end{rem}
   
   We begin by considering the action of the $\star$~product on the space $\Cc$.
	\begin{align*}
		& \left \{ \begin{aligned} \Cc \times \Cc&\rightarrow \Cc, \\
			(f, \varphi) &\mapsto \int_I f(x, \tau) \varphi (\tau, y) d\tau.
		\end{aligned} \right.	
	\end{align*} 
	This map is well defined.
	\begin{lem}\label{intern}
		The $\star$~product defines a composition law on $\Cc$.
	\end{lem}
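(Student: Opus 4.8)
The statement to prove is that for $f,\varphi\in\mathcal{C}^\infty(I^2)$ the function $(x,y)\mapsto\int_I f(x,\tau)\varphi(\tau,y)\,d\tau$ again lies in $\mathcal{C}^\infty(I^2)$, i.e.\ it extends to a function that is $C^\infty$ on some open set containing $I^2$. Let me think about what's actually needed.

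We have $f$ defined and smooth on an open $\Omega_f\supset I^2$ and $\varphi$ smooth on $\Omega_\varphi\supset I^2$. Since $I$ is compact, $I^2$ is compact, so there's an open $U\supset I$ in $\mathbb{R}$ with $U\times U\subset\Omega_f\cap\Omega_\varphi$ — wait, actually I need to be careful: I need $\Omega_f$ to contain $I\times I$ but also the integration is over $\tau\in I$, so I need $f(x,\tau)$ defined for $x$ near $I$ and $\tau\in I$, which is fine since $(x,\tau)$ ranges over a neighborhood of $I\times I\subset\Omega_f$. Actually the cleanest approach: by compactness of $I^2$ and openness of $\Omega_f\cap\Omega_\varphi$, there is $\varepsilon>0$ with $I_\varepsilon\times I_\varepsilon\subset\Omega_f\cap\Omega_\varphi$ where $I_\varepsilon=\{x:\mathrm{dist}(x,I)<\varepsilon\}$. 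Then define $h(x,y)=\int_I f(x,\tau)\varphi(\tau,y)\,d\tau$ for $(x,y)\in I_\varepsilon\times I_\varepsilon$.

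The core is then a standard differentiation-under-the-integral-sign argument. The plan: First establish $h$ is well-defined (the integrand is continuous in $\tau$ on the compact set $I$, hence integrable — note $I$ being compact in $\mathbb{R}$ has finite Lebesgue measure, and if $I$ is not an interval we just integrate over it as a measurable set, or one restricts attention to $I$ an interval as the later sections seem to; in any case the integral is finite). Second, show continuity of $h$ on $I_\varepsilon\times I_\varepsilon$: the map $(x,y)\mapsto f(x,\cdot)\varphi(\cdot,y)$ is continuous from $I_\varepsilon\times I_\varepsilon$ into $C(I)$ with the sup norm (uniform continuity of $f,\varphi$ on a slightly smaller compact neighborhood), and integration against $d\tau$ on the finite-measure set $I$ is a bounded linear functional on $C(I)$, so the composite is continuous. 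Third, iterate for derivatives: for any multi-index, $\partial_x^\alpha\partial_y^\beta$ of the integrand is $\partial_x^\alpha f(x,\tau)\,\partial_y^\beta\varphi(\tau,y)$, which is jointly continuous and locally bounded uniformly in $\tau\in I$ (again by compactness), so by the Leibniz rule / dominated convergence one may differentiate under the integral arbitrarily many times, obtaining $\partial_x^\alpha\partial_y^\beta h(x,y)=\int_I \partial_x^\alpha f(x,\tau)\,\partial_y^\beta\varphi(\tau,y)\,d\tau$, and each such function is continuous on $I_\varepsilon\times I_\varepsilon$ by the same argument as for $h$ itself. Hence $h\in\mathcal{C}^\infty(I^2)$ with witnessing open set $I_\varepsilon\times I_\varepsilon$ (or the interior thereof).

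I don't expect a genuine obstacle here — this is a routine "smoothness of parameter-dependent integrals over a compact domain" fact; the only things to be slightly careful about are (i) producing the common open neighborhood on which everything is simultaneously smooth, using compactness of $I^2$, so that the output lands in $\mathcal{C}^\infty(I^2)$ in the precise sense of Definition~2.2 rather than merely being smooth on the interior; and (ii) if one allows $I$ to be an arbitrary compact set rather than an interval, making sure "$\int_I$" is interpreted as a Lebesgue integral over the finite-measure set $I$ so that the bounded-functional-on-$C(I)$ argument still runs. Both are handled by invoking uniform continuity of $f$, $\varphi$ and all their partials on compact neighborhoods of $I^2$. I would write the proof in three short paragraphs mirroring the three steps above, citing standard differentiation-under-the-integral-sign.
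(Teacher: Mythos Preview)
Your proposal is correct and follows essentially the same route as the paper's proof: bound the integrand and its partial derivatives uniformly on a common open neighbourhood of $I^2$ (using compactness), then invoke the standard theorem on continuous/differentiable dependence of an integral on a parameter to conclude that $f\star g$ and all its derivatives are continuous there. You are in fact slightly more careful than the paper in explicitly constructing the common neighbourhood $I_\varepsilon\times I_\varepsilon$ and in noting the finite-measure interpretation of $\int_I$, but the argument is the same.
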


 \begin{proof}
Let $f$ and $g$ be in $\Cc$, we show that $f \star g$ is in $\Cc$ as well. Let $(x,y)$ belong to an open neighbourhood $\Omega$ of $I^2$ on which both $f$ and $g$ are smooth in the sense of Definition~\ref{DefCc}. Since $f$, $g \in \Cc$, $ \Vert f \Vert_{\infty} = \underset{(x,y)\in\Omega}{\sup} |f(x,y)|$ and $ \Vert g \Vert_{\infty}$ are finite and 
     \[
 | f(x,\tau) g(\tau, y) | \leq \Vert f \Vert_{\infty} \times \Vert g \Vert_{\infty}. 
     \]
     Thus, by the theorem on the continuous dependency of an integral on a parameter we conclude that, for all $(x,y) \in \Omega$, $f \star g$ is continuous as a function of $x$. In the same way, we show that $f\star g$ is continuous as a function of $y$ for $(x,y)\in \Omega$. Thus $f \star g$ is continuous on $\Omega$. 
     Using the same reasoning we establish the continuity of all derivatives of  $f \star g$ on  
     $\Omega$. Then $f \star g\in \mathcal{C}^\infty(I^2)$ and the $\star$~product is a composition law on $\mathcal{C}^\infty(I^2)$.
 \end{proof}

	\begin{lem}\label{lemTOT}
		The set $\Cc$ is total with respect to the $\star$ composition law.
	\end{lem}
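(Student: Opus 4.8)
The plan is to unwind the definition of totality. Since the $\star$~product is linear in its first argument, it suffices to show that an $h\in\Cc$ satisfying $h\star\varphi=0$ for every $\varphi\in\Cc$ must vanish, and then to apply this to $h=f-g$ (which again lies in $\Cc$). Here ``vanishing'' is understood as vanishing on $I^2$, which is the notion of equality of elements of $\Cc$ relevant to the $\star_I$~product.

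The first step is to cut down the supply of test functions to functions of one variable. For any $\psi$ that is smooth on an open neighbourhood $U$ of $I$, the function $\varphi(\tau,y):=\psi(\tau)$ is smooth on $U\times\mathbb{R}\supset I^2$, hence belongs to $\Cc$; evaluating the hypothesis $h\star\varphi=0$ at points of $I^2$ therefore gives $\int_I h(x,\tau)\,\psi(\tau)\,d\tau=0$ for every $x\in I$. In other words, for each fixed $x\in I$ the continuous function $\tau\mapsto h(x,\tau)$ on the compact interval $I$ is orthogonal, in the $L^2(I)$ inner product, to the restriction to $I$ of every function smooth near $I$.

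The second and final step is an approximation argument. Polynomials are smooth on all of $\mathbb{R}$, hence are admissible test functions above, and by the Weierstrass approximation theorem their restrictions are dense in $\mathcal{C}(I)$ for the uniform norm; since $I$ is compact, uniform convergence implies $L^2(I)$ convergence, so in fact $\int_I h(x,\tau)\,c(\tau)\,d\tau=0$ for every $c\in\mathcal{C}(I)$. Choosing $c(\tau):=\overline{h(x,\tau)}$ forces $\int_I|h(x,\tau)|^2\,d\tau=0$, whence $h(x,\cdot)\equiv 0$ on $I$ by continuity; letting $x$ range over $I$ yields $h=0$ on $I^2$, i.e. $f=g$, so $\Cc$ is total with respect to $\star$. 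There is no serious obstacle in this argument: the only two points deserving a little care are that the test functions may be chosen independent of the second variable (this is exactly what reduces the bivariate statement to the classical one-variable orthogonality fact) and the use of Weierstrass density; one could equally well argue by mollification, but the polynomial version is the most economical.
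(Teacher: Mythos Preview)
Your argument is correct. The paper's proof reaches the same $L^2$ conclusion but more directly: instead of first collapsing to one-variable test functions and then invoking Weierstrass, it simply takes the two-variable test function $g(x,y):=\overline{f(y,x)}$, which lies in $\Cc$ whenever $f$ does. Plugging this single choice into $f\star g=0$ gives $\int_I f(x,\tau)\overline{f(y,\tau)}\,d\tau=0$, and setting $y=x$ yields $\|f(x,\cdot)\|_{L^2(I)}=0$ immediately. Your route works equally well, but the Weierstrass step is in fact superfluous even within your own framework: once you have $\int_I h(x,\tau)\psi(\tau)\,d\tau=0$ for every $\psi$ smooth near $I$, you may for each fixed $x$ take $\psi:=\overline{h(x,\cdot)}$ directly, since $h\in\Cc$ makes this slice smooth on a neighbourhood of $I$. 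The paper's choice of a genuinely two-variable $g$ is what lets it skip both the reduction step and any density argument.
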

	
	\begin{proof}
	  Let $f\in \mathcal{C}^\infty(I^2)$ be smooth on an open neighbourhood $\Omega$ of $I^2$. To facilitate the presentation of the proof we assume that $\Omega$ is of the form $\Omega_{\text{1D}}^2$ with $I\subset \Omega_{\text{1D}}$ open. By linearity of integration, it is sufficient to show that $f \star g = 0$ for all $g\in \Cc$ implies $f=0$ on $\Omega$. By assumption, we have 
		\[
		\int_I f(x,\tau) g(\tau,y) d\tau = 0,~~ \forall g \in \Cc.
		\]
		Choosing $g(x,y) = \overline{f(y,x)}$ for $(x,y)\in \Omega$ yields
		$$
		\int_I f(x,\tau) \overline{f(y,\tau)} d\tau = 0,
		$$    	
     	Then, for $y=x$ this gives
		$
		\Vert f(x, \cdot) \Vert ^2 = 0,
		$
	for all $x\in\Omega_{\text{1D}}$, where $f(x,\cdot) : y \mapsto f(x,y)$ and $\Vert \cdot \Vert$ refers to the norm associated to the Hermitian inner product on $L^2(\Omega_{\text{1D}})$. Then, since $\|.\|$ is a norm, $f(x,y) = 0$ as a function of $y\in\Omega_{\text{1D}}$. Furthermore, since $x\in\Omega_{\text{1D}}$ is arbitrary, we conclude that $f=0$ on $\Omega$.
	\end{proof}
	Given that $\Cc$ is endowed with the notion of uniform convergence, we have a corresponding notion of (uniform) weak convergence for its sequences in the sense of Definition~\ref{weakconvDef}. We denote $\Cb$ the weak closure of $\Cc$, that is, the set of all weak limits of sequences of elements of $\Cc$. 
\begin{lem}\label{lemReg}
		The $\star$ product is regular on $\Cb$.
	\end{lem}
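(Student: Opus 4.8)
The plan is to verify the two conditions of Definition~\ref{regulardef} directly, exploiting the fact that convergence on $\Cc$ is uniform convergence on a common open neighbourhood $\Omega$ of $I^2$ (and, implicitly, uniform convergence of all derivatives, since that is the natural convergence structure making $\Cc$ a function space in the sense used here). First I would fix weakly convergent sequences $(f_n)$ and $(g_n)$ in $\Cc$, with $\widetilde{\lim} f_n = f$ and $\widetilde{\lim} g_n = g$ in $\Cb$. To prove condition~1), I must show $(f_n \star g_n)_n$ is weakly convergent, i.e. that for every test function $\varphi \in \Cc$ the limit $\lim_n (f_n \star g_n) \star \varphi$ exists in $\Cc$. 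Here I would use associativity of $\star$ on $\Cc$ (Lemma~\ref{intern} gives closure; associativity on $\Cc$ is the elementary Fubini computation, which I take as available): $(f_n \star g_n)\star \varphi = f_n \star (g_n \star \varphi)$. Since $(g_n)$ is weakly convergent, $g_n \star \varphi \to g\star\varphi =: \psi$ uniformly on $\Omega$, and $\psi \in \Cc$; then since $(f_n)$ is weakly convergent, $f_n \star \psi \to f \star \psi$ uniformly. The one genuine technical point is that I cannot merely use $f_n \star \psi_n \to f\star\psi$ termwise — I have a sequence of test functions $\psi_n := g_n\star\varphi$ converging to $\psi$, not a fixed one — so I would bound
\[
\|f_n \star \psi_n - f\star\psi\|_\infty \le \|f_n \star(\psi_n-\psi)\|_\infty + \|f_n\star\psi - f\star\psi\|_\infty,
\]
handle the second term by weak convergence of $(f_n)$, and the first by $\|f_n\star(\psi_n-\psi)\|_\infty \le |I|\,\|f_n\|_\infty\,\|\psi_n-\psi\|_\infty$, which forces me to know that $\|f_n\|_\infty$ is bounded. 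The analogous estimates on derivatives (via differentiation under the integral sign, as in Lemma~\ref{intern}) give convergence in $\Cc$.

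For condition~2), suppose $\widetilde{\lim} f_n = \widetilde{\lim} f_n' = f$ and $\widetilde{\lim} g_n = \widetilde{\lim} g_n' = g$. I must show $\widetilde{\lim}(f_n\star g_n) = \widetilde{\lim}(f_n'\star g_n')$, i.e. that these two weak limits act identically against every $\varphi\in\Cc$. Using associativity on $\Cc$ again, for each fixed $\varphi$,
\[
(f_n\star g_n)\star\varphi = f_n \star (g_n\star\varphi), \qquad (f_n'\star g_n')\star\varphi = f_n'\star(g_n'\star\varphi).
\]
Now $g_n\star\varphi \to g\star\varphi$ and $g_n'\star\varphi \to g\star\varphi$ (same limit, by $\widetilde{\lim} g_n = \widetilde{\lim} g_n'=g$), call it $\psi\in\Cc$; and by the same ``fixed test function vs.\ converging test function'' splitting as above, $\lim_n f_n\star(g_n\star\varphi) = \lim_n f_n\star\psi = f\star\psi$ and likewise $\lim_n f_n'\star(g_n'\star\varphi) = f\star\psi$, using $\widetilde{\lim} f_n = \widetilde{\lim} f_n' = f$. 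Hence both sides converge to the same element $f\star\psi$ of $\Cc$, which is exactly condition~2).

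The main obstacle I anticipate is not any single hard estimate but the bookkeeping around the ``diagonal'' limit $f_n \star (g_n \star \varphi)$: weak convergence as defined in Definition~\ref{weakconvDef} only directly controls $f_n \star \chi$ for a \emph{fixed} $\chi$, so I need the uniform boundedness of $\|f_n\|_\infty$ (and of the relevant derivative sup-norms) on $\Omega$ to pass from a fixed test function to a convergent sequence of test functions. I would obtain this boundedness from uniform convergence of $(f_n)$ — which is part of what ``weakly convergent sequence of elements of $\Cc$'' should entail once one unwinds the convergence structure, or else I would note that it suffices to work with the canonical representative sequences. Once that uniform-boundedness point is secured, both conditions of regularity reduce to the triangle-inequality splitting above together with differentiation under the integral sign, exactly as in the proof of Lemma~\ref{intern}. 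With Lemma~\ref{lemReg} in hand, Theorem~\ref{ThmMiku} then yields that $\star$ extends to $\Cb$ and remains associative there, completing Theorem~\ref{WellDefined}.
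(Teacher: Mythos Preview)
Your overall architecture matches the paper's: associativity via Fubini to write $(f_n\star g_n)\star\varphi = f_n\star(g_n\star\varphi)$, then a triangle-inequality splitting of $f_n\star(g_n\star\varphi) - f\star(g\star\varphi)$ into a ``fixed test function'' term and a ``cross'' term. The gap is exactly the point you yourself flag as the main obstacle, and your proposed resolution of it is incorrect. You need $\sup_n\|f_n\|_\infty < \infty$ to run the bound $\|f_n\star(\psi_n-\psi)\|_\infty \le |I|\,\|f_n\|_\infty\,\|\psi_n-\psi\|_\infty$, but this does \emph{not} follow from weak convergence. By Definition~\ref{weakconvDef}, weak convergence of $(f_n)$ means only that $f_n\star\varphi$ converges in $\Cc$ for each fixed $\varphi\in\Cc$; it says nothing about $(f_n)$ itself converging uniformly, and there is no reason $\|f_n\|_\infty$ should stay bounded. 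Your sentence ``I would obtain this boundedness from uniform convergence of $(f_n)$ \ldots once one unwinds the convergence structure'' conflates weak with strong convergence, and the fallback to ``canonical representative sequences'' is not a definition available here.

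The paper sidesteps $\|f_n\|_\infty$ entirely. In the cross term one has
\[
(f_n\star(\psi_n-\psi))(x,y)=\int_I f_n(x,\tau)\,(\psi_n-\psi)(\tau,y)\,d\tau,
\]
so the relevant quantity is $\int_I f_n(x,\tau)\,d\tau = (f_n\star 1)(x)$, where $1$ is the constant function, which belongs to $\Cc$. Then
\[
\|f_n\star 1\|_\infty \le \|(f_n-f)\star 1\|_\infty + \|f\star 1\|_\infty,
\]
and the first summand is small for large $n$ by weak convergence applied to the specific test function $\varphi=1$, while the second is a fixed finite constant since $f\star 1\in\Cc$ by construction of the weak limit. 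This furnishes the needed uniform bound on the cross term using only the hypothesis of weak convergence. With this substitution in place of your $\|f_n\|_\infty$ bound, the rest of your argument (including your treatment of condition~2)) goes through exactly as you wrote it and coincides with the paper's proof.
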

 \begin{proof}
        Let $f\in\Cc$, by $\|f\|_\infty$ we designate the essential supremum of $f$ over $I^2$.
	 	Let $(f_n)_n$ and $(g_n)_n$ be two weakly convergent sequences with weak limits $f$ and $g$, respectively. Then, by definition,
  \begin{subequations}
	 \begin{align}
	 & \forall \varphi \in \Cc,\,\forall \epsilon > 0,\,\exists N_0 \in \N, \forall n \geq N_0,~\lVert f_n \star \varphi - f \star \varphi \rVert_\infty < \epsilon, \label{fnass}\\
	 & \forall \varphi \in \Cc,\, \forall \epsilon > 0,\,\exists N_1 \in \N, \forall n \geq N_1,~\lVert g_n \star \varphi - g \star \varphi \rVert_\infty < \epsilon,\label{gnass}
 \end{align}
  \end{subequations}
    and by construction $g\star \varphi\in \Cc$ so $f\star (g\star \varphi)\in \Cc$ is well defined. 
	We now  prove that $f_n\star g_n$ converges weakly. By Fubini's theorem, the $\star$~product is associative so,
	for all $\varphi\in\Cc$, 
 $$
	\lVert (f_n \star g_n) \star \varphi - f \star (g\star \varphi) \rVert_\infty = \lVert f_n \star (g_n \star \varphi) - f \star (g \star \varphi) \rVert_\infty,
	$$
	while the triangle inequality gives
	$$
	 \lVert f_n \star (g_n \star \varphi) - f \star (g \star \varphi) \rVert_\infty \leq \left\lVert f_n \star \big((g_n - g) \star \varphi\big) \right\rVert_\infty + \left\lVert (f_n-f) \star (g \star \varphi) \right\rVert_\infty.
	$$
    Given that $g \star \varphi \in \Cc $, by assumption (\ref{fnass}) there exists $N_0\in\mathbb{N}$ such that for $n\geq N_0$, $\lVert (f_n-f) \star (g  \star \varphi) \rVert_\infty < \epsilon$ for any $\epsilon>0$. Furthermore,
	$$
	 \left( f_n \star \big((g_n-g) \star \varphi\big)\right)(x,y) = \int_I f_n(x,\tau) \big((g_n-g)\star \varphi \big)(\tau,y) d\tau, 
	$$
  and therefore,
  $$
  \lVert f_n \star \big((g_n-g) \star \varphi\big) \rVert_\infty \leq \lVert (g_n-g) \star \varphi \rVert_\infty\, \left| \int_I f_n(x,\tau) d\tau\right|.
  $$
By Eq.~(\ref{gnass}), for any $\epsilon>0$ there exists $N_1\in\mathbb{N}$ such that for all $n\geq N_1$, $\lVert (g_n-g) \star \varphi \rVert_\infty < \epsilon $. Finally, noting that $\int_I f_n(x,\tau) d\tau = f_n \star 1$, it comes
 $$
\left| \int_I f_n(x,\tau) d\tau\right|\leq \,\lVert f_n \star 1 \rVert_\infty \leq \lVert f_n \star 1-f \star 1 \rVert_\infty + \lVert f \star 1 \rVert_\infty.
 $$
 Assumption (\ref{fnass}) guarantees the existence of $N_2$ with $n\geq N_2\Rightarrow \lVert (f_n-f) \star 1 \rVert_\infty  < \epsilon$ for any $\epsilon>0$. In addition, $f \star 1 \in \Cc$ and therefore $\lVert f \star 1 \rVert_\infty< \infty$.

 Choosing the same $\epsilon>0$ in all of the above arguments for convenience, there exists $\tilde{N}\geq \sup(N_0,N_1,N_2)$ such that $n\geq \tilde{N}$ implies
 $$
 \lVert (f_n \star g_n) \star \varphi - f \star (g \star \varphi) \rVert_\infty \leq \epsilon+\epsilon \big(\epsilon\, +\lVert f \star 1 \rVert_\infty\big)\xrightarrow[\epsilon\to 0]{} 0,
 $$
 which shows that sequence $f_n\star g_n$ converges weakly to $ h:=f\star g$ with $h\star \varphi=f\star (g\star\varphi)\in \Cc$ for all $\varphi \in \Cc$ as noted earlier.
 
 These results imply that both 1) and 2) of Definition~\ref{regulardef} are satisfied: 1) is entailed by the weak convergence of $f_n\star g_n$; while 2) follows from its convergence to $h$. Indeed, let $(f_n)_n, (f_n')_n, (g_n)_n \text{ and } (g_n')_n$ be sequences in $\Cc$. Suppose that 
	\(
	\widetilde{\lim}\, f_n = \widetilde{\lim}\, f_n'=f\) and \(
	\widetilde{\lim}\, g_n= \widetilde{\lim}\, g_n'=g.
	\)
	We have already established that
	$
	\widetilde{\lim} \, f_n \star g_n = h$
  and the same arguments lead to 
 $\widetilde{\lim} \,f_n' \star g_n'=h 
	$ as well, hence condition 2) is verified. 	
 \end{proof}
Since the $\star$~product is regular and associative on $\Cc$, by Theorem~\ref{ThmMiku} it extends to $\Cb$. In particular for any $f,g\in \Cb$, $f\star g$ is a well-defined element of $\Cb$, this set is total with respect to the $ \star$-product and the latter is associative over it. This proves Theorem~\ref{WellDefined}.
\end{proof}
\begin{rem}The $\star$~product is not commutative and we used only right $\star$~products in approaching elements of $\Cb$ but by regularity left $\star$~products are defined just as well.
\end{rem}

We  turn to proving Corollary~\ref{DinCb}. 
 \begin{proof}
 We adapt standard arguments from the theory of distributions. Let $\big(d_n\big)_n$ be a sequence of 
  nonnegative, smooth compactly supported functions of a \emph{single} variable with supports $I_n$, $I_n\subset \mathbb{R}$ compact, and such that,
 \begin{itemize}
     \item[1)] $\forall n \in \N, \int_{-\infty}^{+\infty} d_n(\tau) d\tau = 1$,
    % \item[1b)]    $\forall n \in \N,  \int_{-\infty}^{+\infty} d_n(\tau-y)d\tau =1$
     \item[2)] The size of $I_n$, denoted  $|I_n|$,  is such that  $|I_n| \underset{n \rightarrow +\infty}{\longrightarrow} 0$. 
 \end{itemize}
 Such a sequence exists: choose a sequence $I_n$ of compact sets with $0\in I_n$ and $|I_n|\to0$ as $n\to+\infty$. Then one can find a nonnegative smooth function with support exactly $I_n$, \cite[Prop. 2.3.4.]{krantz2002primer}. Dividing this function by the (non-zero) value of its integral over $I_n$ guarantees 1). Then define the sequence $(D_n)_n$ of functions of \emph{two} variables by $D_n(x,y):=d_n(x-y)$. Now we fix $I\subset \mathbb{R}$ compact and in the rest of this proof we consider only $\star_I$ products on this $I$. Observe that by construction $D_n\in\Cc$. %and $D_n$ has support $K_n=I_n\times I_n\subset I^2$. 
 Then, for $\varphi\in \Cc$ and $n\in\mathbb{N}$, the function $D_n\star \varphi-\varphi$ satisfies for all $(x,y)\in I^2$,
 \begin{align}
 \left\vert \big((D_n \star \varphi) - \varphi \big)(x,y) \right \vert &=\left\vert \int_I d_n(x-\tau)\varphi(\tau,y)d\tau - \varphi(x,y)\right \vert,\nonumber\\
  & \leq \int_{I} d_n(x-\tau) \left\vert \varphi(\tau,y)-\varphi(x,y)\right \vert d\tau \label{UpperBound}\\
     &\hspace{15mm}+ \left \vert \int_{I} d_n(x-\tau)\varphi(x,y)d\tau -\varphi(x,y) \right\vert. \nonumber
 \end{align}
Let $\tau':=x-\tau$ and suppose that $x$ is in the interior of $I$. Then observe that, 
\begin{equation}\label{interior}
\exists N\in \mathbb{N},\,\forall n>N,\quad\int_I d_n(x-\tau) d\tau = \int_{I_n}d_n(\tau')d\tau'=1.
\end{equation}
And so, for all $n>N$, 
%$\lim_{n\to +\infty}\int_I d_n(x-\tau) d\tau=1$. 
%For $x$ on a boundary of $I$, we set $\lim_{n\to +\infty}\int_I d_n(x-\tau) d\tau=1$ by continuity (note, this is equivalent to setting $\Theta(0)=1$, the convention which we follow, see p.~\pageref{Theta0}). As a consequence, for all $(x,y)\in I^2$, 
\begin{equation}\label{Simplification1}
\int_{I} d_n(x-\tau)\varphi(x,y)d\tau -\varphi(x,y)=0.
\end{equation}
In addition,  by the mean value inequality,
  \begin{align}\label{Simplification2}
      \int_{I} d_n(x-\tau) \left \vert \varphi(\tau,y)-\varphi(x,y) \right \vert d\tau, 
     & \leq \left  \Vert \partial_x\varphi \right \Vert_{\infty} \int_{I} d_n(x-\tau) \vert x-\tau \vert d\tau, 
     \end{align}
where $ \partial_x \varphi := \partial \varphi/ \partial x$ and $\Vert \partial_x\varphi  \Vert_{\infty}$ is determined over the open neighborhood of $I^2$ on which $\varphi$ is smooth. The precise choice for this open set does not matter here, as we need only $\Vert \partial_x\varphi  \Vert_{\infty}$ to be finite. 
By the argument presented in Eq.~(\ref{interior}) and thereafter we obtain, for all $n>N$,
$$
 \int_{I} d_n(x-\tau) \vert x-\tau \vert d\tau=  \int_{I_n} d_n(\tau') |\tau'|d\tau'\leq |I_n|\int_{I_n} d_n(\tau')d\tau'=|I_n|.
$$
%and similarly for $x$ on the boundary of $I$, by continuity. Therefore, 
Combined together, the results of Eqs.~(\ref{Simplification1}) and (\ref{Simplification2}), simplify Eq.~(\ref{UpperBound}) to  
\begin{align}\label{Conv0}
     \left \vert \left((D_n \star \varphi) - \varphi\right)(x,y) \right \vert 
     & \leq \left \Vert \partial_x\varphi \right \Vert_{\infty} \times |I_n| \underset{n \rightarrow +\infty}{\longrightarrow} 0.
 \end{align}
 As the upper bound here does  not depend $x$ nor $y$, the convergence to 0 is uniform. Considering now the boundary of $I$, observe first that for all $x$ in the interior of $I$, Eq.~(\ref{Simplification1}) implies that $\lim_{n\to+\infty}\int_I d_n(x-\tau )d\tau =1$ and so we  set $\lim_{n\to+\infty}\int_I d_n(x'-\tau )d\tau =1$ for $x'$ on the boundary of $I$ by continuity. This is equivalent to choosing $\Theta(0)=1$, the convention which we follow, see p.~\pageref{Theta0}. In turn this entails that, for $x'$ on the boundary of $I$, $\lim_{n\to+\infty}\left  \Vert \partial_x\varphi \right \Vert_{\infty} \int_{I} d_n(x'-\tau) \vert x'-\tau \vert d\tau=0$ so that Eq.~(\ref{Conv0}) is verified on the boundary of $I$.
  This establishes that $\delta \in \Cb$ and $\delta\star \varphi=\varphi$ for all $\varphi\in \Cc$. By regularity of the $\star$~product this remains true for all $\varphi\in\Cb$.
\begin{rem}
    This proves that $\delta$ is the left neutral element for $\star$. Considering $\varphi\star D_n$ a similar proof establishes that $\delta$ is also the right neutral element for $\star$.
\end{rem}

With the same sequences $(D_n)_n$ and $(d_n)_n$ defined above we have, 
 \begin{align*}
\left \vert \left(\partial_y D_n \star \varphi \right)(x,y) - \left(-\partial_y \varphi(x,y) \right) \right \vert 
& \leq \left[ d_n(x-\tau) \varphi(\tau,y)\right]_{I} \\
&\hspace{7mm}+ \left \vert \partial_y \varphi(x,y) - \int_{I} d_n(x-\tau) \partial_\tau \varphi(\tau,y)  d\tau  \right \vert .
%& \leq \left \vert \partial_y \varphi(x,y) - \int_{I_n} d_n(x-\tau) \partial_\tau \varphi(\tau,y)  d\tau \right \vert.
 \end{align*}
For any $x$ in the interior of $I$, there exists $N\in\mathbb{N}$ such that for all $n>N$, $\left[ d_n(x-\tau) \varphi(\tau,y)\right]_{I}=\left[ d_n(x-\tau) \varphi(\tau,y)\right]_{I_n}=0$. Moreover, since $\partial_{\tau} \varphi(\tau,y) \in \Cc$, the quantity $\partial_y \varphi(x,y) - \int_{I} d_n(x-\tau) \partial_\tau \varphi(\tau,y)  d\tau$ converges uniformly to 0. The situation on the boundary of $I$ is settled by continuity as presented above in details in the case of $\delta$. This shows that the Dirac delta derivative is an element of $\Cb$. By induction we obtain the same result for all its subsequent derivatives.

To alleviate notations we take $I=[a,b]$. We turn to proving that $\Theta\in\Cb$. Let $x \in I$ and define $h_n(x,y):=\int_{-\infty}^xd_n(\tau-y)d\tau$. Then $h_n \in \mathcal{C}^{\infty}(\R^2)$ and, for all $\varphi \in \mathcal{C}^{\infty}(\R)$, we have $\underset{n \rightarrow \infty}{\lim}  \langle \partial_{x} h_n, \varphi \rangle = \langle \delta , \varphi \rangle$. 
Furthermore, \(h_n \underset{n \rightarrow +\infty}{\longrightarrow} \Theta\) weakly. 
Indeed, 
\begin{itemize}
    \item[1)] If $x< y$,  there exists an integer $N$ such that for all $n>N$, $I_n \cap\, ]-\infty,x-y]=\emptyset$ and so $h_n(x,y)=0.$
    \item[2)] If $x> y$, there exists an integer $N$ such that for all $n>N$, $I_n \cap\, ]-\infty,x-y]=I_n$ and so $h_n(x,y)=\int_{I_n} d_n(\tau,y)d\tau=1$.
    \item[3)] If $y=x$, we set $\lim_{n\to+\infty} h_n(x,x)=1$ for all $x\in I$, consistently with the convention $\Theta(0)=1$.
\end{itemize}
Since $\forall n\in\mathbb{N}$, $h_n \in \Cc$ and $h_n$ is nonnegative (because $d_n$ is nonnegative),  $\forall \varphi \in \Cc$ we have,
\begin{align*}
\left \vert \big(h_n \star \varphi\big)(x,y) - \int_{a}^x \varphi(\tau,y)d\tau \right \vert& = \left \vert \int_{a}^x (h_n(x,\tau)-1) \varphi(\tau,y) d\tau + \int_x^b h_n(x,\tau) \varphi(\tau,y) d\tau \right \vert, \\
%&\hspace{10mm} = \left \vert \int_{a}^x \left(\int_{-\infty}^xd_n(\sigma-\tau)d\sigma-1\right) \varphi(\tau,y) d\tau + \int_x^b\left(\int_{-\infty}^xd_n(\sigma-\tau)d\sigma\right)  \varphi(\tau,y) d\tau \right \vert, \\
%&\hspace{10mm} \leq \int_{a}^x \left \vert \int_{-\infty}^x d_n(\sigma-\tau) d\sigma -1 \right \vert \left \vert \varphi(\tau,y) \right \vert d\tau + \int_x^b \left( \int_{-\infty}^x d_n(\sigma- \tau) d\sigma \right) \left \vert \varphi(\tau,y) \right \vert d\tau .
& \leq \int_{a}^x |h_n(x,\tau)-1|\, |\varphi(\tau,y)| d\tau + \int_x^b h_n(x,\tau)\, |\varphi(\tau,y)| d\tau.
\end{align*}
Now $\forall (x,y)\in I^2$, thanks to Lebesgue's dominated convergence theorem, we have
\begin{align*}
  \underset{n \rightarrow +\infty}{\lim} \int_{a}^x \left \vert h_n(x,\tau)-1\right \vert \vert \varphi(\tau,y) \vert  d\tau & = \int_{a}^x \left(   \underset{n \rightarrow +\infty}{\lim}  \left \vert h_n(x,\tau)-1 \right \vert  \right) \vert \varphi(\tau,y) \vert d\tau,  \\
  & =  0,
\end{align*}
and, 
\begin{align*}
  \underset{n \rightarrow +\infty}{\lim} \int_{x}^b h_n(x,\tau) \vert\varphi(\tau,y)\vert d\tau & = \int_{x}^b \left(  \underset{n \rightarrow +\infty}{\lim}h_n(x,\tau)\right) \vert \varphi(\tau,y)\vert d\tau,  \\
  & =  0.
\end{align*}
This shows that 
$
\left \Vert \big(h_n \star \varphi\big)(x,y) - \int_{a}^x\varphi(\tau,y) d\tau \right \Vert_{\infty} \underset{n \to +\infty}{\longrightarrow} 0$. Hence Heaviside theta is an element of $\Cb$ and $\Theta\star \varphi=\int_I \Theta(x-\tau)\varphi(\tau,y)d\tau = \int_a^x \varphi(\tau,y)d\tau$. Together with the results concerning $\delta$ and its derivatives, and multiplying point-wise by functions of $\Cc$ as needed, this establishes that $\mathcal{D} \subset \Cb$.
 \end{proof}
    
We prove Corollary \ref{CorStarR} regarding the $\star_\mathbb{R}$~product, starting with its failure to be well defined on $\mathcal{C}^\infty(\mathbb{R}^2)$.
\begin{proof}
 Suppose that the $\star_{\mathbb{R}}$~product on $\R$, defined by \[
\big(f\star_\mathbb{R} g\big)(x,y) := \int_{-\infty}^\infty f(x, \tau) g (\tau, y) d\tau,
\]
is a regular operation on $\mathcal{C}^\infty(\mathbb{R}^2)$. Now consider $(f_n)_n$ any sequence of functions of $\mathcal{C}^\infty(\mathbb{R}^2)$ converging to the constant function equal to $1$.  Then, regularity of the $\star_\mathbb{R}$ product would imply that $1 \star_{\mathbb{R}} 1 = \int_{-\infty}^{+\infty} 1 \times 1~ d\tau$ is well defined. 
Since it is not, the $\star_\mathbb{R}$ product is irregular and cannot be properly defined on $\overline{\mathcal{C}^\infty(\mathbb{R}^2)}$ and more generally on any unbounded subdomain of $\mathbb{R}^2$.
It is however possible to use $\star_\mathbb{R}$ on the smaller set $\D\subset \Cb$.
With the standard definition for the support of a distribution, maps $\tau\mapsto \delta^{(i)}(x-\tau)$, $\tau\mapsto\delta^{(j)}(\tau-y)$ and $\tau\mapsto\Theta(x-\tau)\Theta(\tau-y)$ are compactly supported. Then, for any $d,e\in \D$,  one can always find compacts $I\subset \mathbb{R}$ including these supports and consequently $d\star_\mathbb{R}e = d\star_I e$ for any $(x,y)\in\mathbb{R}^2$. 
Furthermore, thanks to these supports, we can extend the set $\D$ by considering the coefficients not in $\mathcal{C}^{\infty}(I^2)$ but in $\mathcal{C}^{\infty}(\R^2)$, the space of smooth functions on $\mathbb{R}^2$.
\end{proof}

Since the $\star$~product is associative, by linearity of the integral it is distributive with respect to the addition. In addition, by Corollary~\ref{DinCb} it has an identity element $1_\star \equiv\delta(x-y)$. Thus, we have the following corollary.
\begin{cor}\label{algebraD}
    $(\D,\star)$ is an algebra over $\mathbb{C}$ with unit. 
\end{cor}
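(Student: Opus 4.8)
The plan is to check, one axiom at a time, that $(\D,+,\star)$ meets the definition of a unital associative $\mathbb{C}$-algebra: that $\D$ is a $\mathbb{C}$-vector space, that $\star$ restricts to a $\mathbb{C}$-bilinear (hence distributive) map $\D\times\D\to\D$, that $\star$ is associative on $\D$, and that $\D$ contains a two-sided unit for $\star$. Almost every ingredient is already available from Theorem~\ref{WellDefined} and Corollaries~\ref{DinCb} and~\ref{CorStarR}, so the proof is essentially an assembly of earlier results; the only point not yet recorded is that the set $\D$ of Definition~\ref{DefD} is itself closed under the vector-space operations, and this is routine.

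First I would verify that $\D$ is a $\mathbb{C}$-vector space. Given $d=\tilde d\,\Theta(x-y)+\sum_{i\ge 0}\tilde d_i\,\delta^{(i)}(x-y)$ and $e=\tilde e\,\Theta(x-y)+\sum_{i\ge 0}\tilde e_i\,\delta^{(i)}(x-y)$ written in the form of Definition~\ref{DefD}, and $\lambda\in\mathbb{C}$, one forms $d+\lambda e=(\tilde d+\lambda\tilde e)\,\Theta(x-y)+\sum_{i\ge 0}(\tilde d_i+\lambda\tilde e_i)\,\delta^{(i)}(x-y)$; since $\Cc$ is a $\mathbb{C}$-vector space the new coefficients $\tilde d+\lambda\tilde e$ and $\tilde d_i+\lambda\tilde e_i$ again lie in $\Cc$, and the series converges in the sense made precise in Section~\ref{FrechetD}, so $d+\lambda e\in\D$; the element with all coefficients equal to $0$ is the additive neutral element, and the remaining axioms are inherited pointwise from $\mathbb{C}$. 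Next, $\star$ maps $\D\times\D$ into $\D$: this is exactly Corollary~\ref{CorStarR}, using (as in its proof) that $d\star_\mathbb{R}e=d\star_I e$ for every compact $I$ large enough to contain the compact supports of $\tau\mapsto d(x,\tau)$ and $\tau\mapsto e(\tau,y)$, so that writing simply $\star$ on $\D$ is unambiguous. Bilinearity of $\star$ then follows immediately from the linearity of the integral in Eq.~(\ref{DefStarI}) in each of its two arguments, and with it distributivity over $+$; associativity on $\D\subseteq\Cb$ is part of Theorem~\ref{WellDefined} (and is restated for $\star_\mathbb{R}$ in Corollary~\ref{CorStarR}); and by Corollary~\ref{DinCb} the element $1_\star:=\delta(x-y)$, which corresponds in Definition~\ref{DefD} to $\tilde d=0$, $\tilde d_0\equiv 1$ and $\tilde d_i=0$ for $i\ge 1$ and hence lies in $\D$, satisfies $1_\star\star d=d\star 1_\star=d$ for all $d\in\D$. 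Collecting these facts yields the corollary.

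I do not expect a genuine obstacle, since the substantive analytic content was dispatched in Theorem~\ref{WellDefined} and Corollaries~\ref{DinCb} and~\ref{CorStarR}. The only care needed is bookkeeping: confirming that termwise addition of the possibly infinite expansions in Definition~\ref{DefD} is compatible with the convergence notion of Section~\ref{FrechetD}, so that $\D$ really is closed under $+$, and confirming that $d\star_I e$ is independent of the choice of sufficiently large compact $I$, so that the single symbol $\star$ is well defined on $\D$. Both are immediate from constructions already in place, and the statement follows.
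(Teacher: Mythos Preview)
Your proposal is correct and follows essentially the same approach as the paper, which merely records in one sentence that associativity comes from Theorem~\ref{WellDefined}, distributivity from linearity of the integral, and the unit $1_\star=\delta(x-y)$ from Corollary~\ref{DinCb}. Your version is more carefully written---you explicitly check that $\D$ is a $\mathbb{C}$-vector space and that the product is well defined independently of the compact $I$---but these are exactly the routine verifications the paper leaves implicit.
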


\subsection{Inner and outer products, transpose}\label{InnOut}
 There are two natural injections of $\overline{\mathcal{C}^{\infty}(I)}$ into $\Cb$, which we call the left $\psi_{l}$ and right $\psi_r$ injections, with
 \begin{eqnarray*}
 \psi_l:~\overline{\mathcal{C}^{\infty}(I)}\hspace{-2mm}&\hspace{-20mm}\to \Cb,\hspace{8mm}&\psi_r:~\overline{\mathcal{C}^{\infty}(I)}\to \Cb,\\
 f\hspace{-2mm}&\mapsto \psi_l(f)(x,y) = f(x), \hspace{8mm} &\hspace{17.8mm}f\mapsto \psi_r(f)(x,y) = f(y).
 \end{eqnarray*}
 These injections permit the left $\star$~action of an element $g\in\Cb$ on any $f\in\overline{\mathcal{C}^{\infty}(I)}$ via its $\star$~action on $\psi_l(f)$,
 \begin{subequations}
 \begin{align}
 &\big(g \star \psi_l(f)\big) (x) = \int_{I} g(x,\tau)f(\tau) d\tau,\label{leftvec}\\
 &\big(\psi_l(f)\star g\big)(x,y) = f(x) \int_I g(\tau,y) d\tau.\label{leftouter}
\end{align}
\end{subequations}
Note that $\int_I g(\tau,y) d\tau\equiv 1\star g$ is well defined by construction of $\Cb$. Similarly, we have the right $\star$~action,
\begin{subequations}
 \begin{align}
 &\big(g \star \psi_r(f)\big) (x,y) = \int_{I} g(x,\tau) d\tau\,f(y),\label{rightouter}\\
 &\big(\psi_r(f)\star g\big)(y) = \int_I f(\tau) g(\tau,y) d\tau,\label{rightvec}
\end{align}
\end{subequations}
and finally for $h\in\overline{\mathcal{C}^\infty(I)}$, $\tilde{f}\in \mathcal{C}^\infty(I)$,
\begin{subequations}
 \begin{align}
 \big(\psi_r(h) \star \psi_l(\tilde{f})\big)  &= \int_{I} h(\tau)\tilde{f}(\tau) d\tau=\langle h,\tilde{f}\rangle,\label{inner}\\
 \big(\psi_l(h) \star \psi_r(\tilde{f})\big)(x,y) &= h(x)\int_I 1 d\tau \tilde{f}(y)=|I| h(x) \tilde{f}(y).\label{outer}
\end{align}
\end{subequations}
In Eq.~(\ref{inner}), the notation $\langle.,.\rangle$ designates an  inner product defining the action of linear forms $\mathcal{C}^\infty(I)\to \mathbb{C}$. Eq.~(\ref{inner}) establishes that the inner product on real-valued functions of $\mathcal{C}^{\infty}(I)$ is a $\star$~product and so is the action of the linear functional $h\in\overline{\mathcal{C}^\infty(I)}$ on a test function $f\in \mathcal{C}^\infty(I)$. Then, defining for $h,f,\in \overline{\mathcal{C}^{\infty}(I)}$, $\tilde{e}\in \mathcal{C}^{\infty}(I)$, $(h\otimes f)\star \psi_l(\tilde{e}) := h\langle f,\tilde{e}\rangle$, it appears that Eqs.~(\ref{leftouter}, \ref{rightouter}, \ref{outer}) are outer products.	
	\begin{defn}
		Let $g\in\Cb$ and $\tilde{f},\tilde{h}\in\mathcal{C}^\infty(I)$. The transpose $g^\mathrm{T}$ of $g$ is defined through $\langle \tilde{h}, g^\mathrm{T}\star \psi_l (\tilde{f}) \rangle:=\langle g\star \psi_l(\tilde{h}), \tilde{f} \rangle $.
	\end{defn}
	\begin{prop}
	    Let $g\in\Cb$. Then $g^\mathrm{T}(x,y)=g(y,x)$. 
	\end{prop}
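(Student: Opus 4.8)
The plan is to unwind the definition of the transpose directly and match the two sides against the integral formula~(\ref{inner}) for the bracket $\langle\cdot,\cdot\rangle$. First I would take arbitrary $\tilde f,\tilde h\in\mathcal{C}^\infty(I)$ and compute the right-hand side $\langle g\star\psi_l(\tilde h),\tilde f\rangle$. Using~(\ref{leftvec}), $\big(g\star\psi_l(\tilde h)\big)(x)=\int_I g(x,\sigma)\tilde h(\sigma)\,d\sigma$, and then plugging this into the inner product via~(\ref{inner}) gives a double integral $\int_I\int_I g(\tau,\sigma)\tilde h(\sigma)\tilde f(\tau)\,d\sigma\,d\tau$. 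On the other side, $\langle\tilde h,g^\mathrm{T}\star\psi_l(\tilde f)\rangle=\int_I \tilde h(\tau)\big(g^\mathrm{T}\star\psi_l(\tilde f)\big)(\tau)\,d\tau=\int_I\int_I \tilde h(\tau)\,g^\mathrm{T}(\tau,\sigma)\tilde f(\sigma)\,d\sigma\,d\tau$. Relabelling the dummy variables in one of the two expressions so that both are integrals of $g^{\mathrm T}$ (resp.\ $g$) against the same product $\tilde h\,\tilde f$, the defining identity becomes
\begin{equation*}
\int_I\int_I \big(g^{\mathrm T}(\tau,\sigma)-g(\sigma,\tau)\big)\tilde h(\tau)\tilde f(\sigma)\,d\sigma\,d\tau=0
\end{equation*}
for all $\tilde f,\tilde h\in\mathcal{C}^\infty(I)$.

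The conclusion $g^{\mathrm T}(x,y)=g(y,x)$ then follows from a separation/totality argument: fixing $\tilde h$ and varying $\tilde f$ over $\mathcal{C}^\infty(I)$, the inner integral $\int_I\big(g^{\mathrm T}(\tau,\sigma)-g(\sigma,\tau)\big)\tilde h(\tau)\,d\tau$ is orthogonal to every $\tilde f$, hence vanishes as a function of $\sigma$ (this is exactly the kind of argument already used in the proof of Lemma~\ref{lemTOT}, taking $\tilde f$ to be the conjugate of the function in question when $g$ is smooth, or invoking density of $\mathcal{C}^\infty(I^2)$ in $\Cb$ together with the regularity of $\star$ for general $g\in\Cb$). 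Then varying $\tilde h$ and repeating the same argument in the remaining variable forces $g^{\mathrm T}(\tau,\sigma)=g(\sigma,\tau)$ pointwise, i.e.\ $g^{\mathrm T}(x,y)=g(y,x)$.

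The one genuine subtlety — and the step I expect to be the main obstacle — is that for general $g\in\Cb$ (e.g.\ $g$ containing Dirac-delta terms as in $\mathcal{D}$) the "double integral'' above is not a literal Lebesgue integral, so the naive use of Fubini and of the du~Bois-Reymond lemma has to be justified in the weak/Mikusi\'nski sense. The clean way around this is to first establish the proposition for $g\in\mathcal{C}^\infty(I^2)$, where all manipulations are classical and the totality argument of Lemma~\ref{lemTOT} applies verbatim, and then extend to all of $\Cb$ by approximating $g=\widetilde{\lim}\,g_n$ with $g_n\in\mathcal{C}^\infty(I^2)$: the map $g\mapsto g^{\mathrm T}$ is defined through $\star$-pairings with elements of $\mathcal{C}^\infty(I)$, which behave well under weak limits by regularity of the $\star$~product (Lemma~\ref{lemReg}), while the map $g(x,y)\mapsto g(y,x)$ is plainly continuous for weak convergence since it merely swaps the roles of the two variables in each defining sequence. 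Hence both sides of $g^{\mathrm T}(x,y)=g(y,x)$ are weak limits of the corresponding smooth-case identities, which gives the result on $\Cb$.
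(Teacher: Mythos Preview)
Your proposal is correct and rests on the same double-integral computation as the paper. The only difference is one of framing: the paper simply \emph{verifies} that the candidate $g(y,x)$ satisfies the defining relation by writing out $\langle g\star\psi_l(\tilde h),\tilde f\rangle$ as an iterated integral, swapping the order of integration, and recognising the result as $\langle \tilde h,\,g(y,x)\star\psi_l(\tilde f)\rangle$; it then stops, with uniqueness left implicit (it would follow from totality, Lemma~\ref{lemTOT}). You instead work in the other direction, assuming a $g^{\mathrm T}$ exists and deducing its form via a separation/du~Bois-Reymond argument, which is logically equivalent but longer. Your discussion of extending from smooth $g$ to general $g\in\Cb$ by weak limits is more scrupulous than the paper, which writes the iterated integral formally without comment.
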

	
	\begin{proof} 
Let $\tilde{f},\tilde{h}\in\mathcal{C}^\infty(I)$, then 
		\begin{align*}
			\langle g\star \psi_l(\tilde{h}), \tilde{f} \rangle &% = \int_I (g \star \tilde{h})(\tau) \tilde{f}(\tau) d\tau \\
			 = \int_I  \left( \int_I g(\tau, \sigma) \tilde{h}(\sigma) d\sigma \right) \tilde{f} (\tau) d\tau \\
			&= \int_I  \left( \int_I g(\tau, \sigma)  \tilde{f}(\tau) d\tau \right) \tilde{h}(\sigma) d\sigma\\% \text{, because } g \in \D\\
			%&= \int_I \left( g^\mathrm{T}\star \psi_l(\tilde{f}) \right)(\sigma) \tilde{h}(\sigma) d\sigma \\
			&= \langle \tilde{h}, g^\mathrm{T}\star \psi_l (\tilde{f}) \rangle.
		\end{align*}
	\end{proof}
	\begin{rem}
 For $g\in\overline{\mathcal{C}^\infty(I)}$, $\psi_l(g)=\psi_r(g)^\mathrm{T}$. The Hermitian conjugate $g^*$ of $g$ is as usual $g^*:=\bar{g}^\mathrm{T}$, where $\bar{g}$ denotes the complex conjugate of $g$. The usual inner product of complex-valued functions $\tilde{h},\tilde{f}\in\mathcal{C}^\infty(I)$ is recovered as 
 \( \langle \tilde{h},\tilde{f}\rangle=\psi_l(\tilde{h})^*\star \psi_l(\tilde{f})\)
 and similarly for the action of a linear functional $h\in\overline{\mathcal{C}^\infty(I)}$ on $\tilde{f}$,
  \[
 \langle h,\tilde{f}\rangle=\psi_l(h)^*\star \psi_l(\tilde{f}).
 \]
 In other terms, linear functionals send functions to the base field by $\star$~action. There arises the question of whether elements of $\Cb$ also act \emph{as linear functionals} on elements of $\Cc$ in the manner of a $\star$~product. The answer is both negative and positive. Negative because the action as linear functional $h\in \Cb$ on $\tilde{f}\in \Cc$ is
\begin{equation}\label{FunctionalActionCI2}
\langle h,\tilde{f}\rangle = \int_{I^2} h(x,y)\tilde{f}(x,y) dx dy, 
\end{equation}
which is not a $\star$~product of elements of $\Cb$. It is however induced by a higher dimensional $\star$~product defined for elements of $\overline{\mathcal{C}^\infty(I^4)}$ just as the $\star$~product on $\Cb$ induces the action of linear functionals in $\overline{\mathcal{C}^\infty(I)}$,
\[
h,f\in\overline{\mathcal{C}^\infty(I^4)},~(h\star f)(w,x,y,z):=\int_{I^2} h(w,x,\tau,\sigma)f(\sigma,\tau,y,z)d\tau d\sigma.
\]
This construction further extends to higher dimensions and will play a role when solving partial differential equations with $\star$~products. It exists because there is an injection of the linear functionals of a vector space into the endomorphisms of that space. Here the $\star$~product produces the composition of endomorphisms of $\Cc$, which through the injections $\psi_l$ and $\psi_r$, induces the action of linear functionals. This observation can be used to define a coherent umbral calculus on distributions, beyond the scope of this work.
\end{rem}

\subsection{Reductions to existing products}
In addition to the inner product on $\Cc$ and the action of linear functionals $\Cc\to \mathbb{C}$, the $\star$~product also induces a number of existing products.

\textit{Convolution.}
Consider two distributions $d,e\in \D$, $d(x,y)=\tilde{d}(x,y)\Theta+\sum_{i=0}^{+\infty} \tilde{d}_{i}(x,y) \delta^{(i)} $ and $e(x,y)=\tilde{e}(x,y)\Theta+\sum_{i=0}^{+\infty} \tilde{e}_{i}(x,y) \delta^{(i)}$ such that there exists functions $\tilde{D},\tilde{D}_i, \tilde{E}, \tilde{E}_i\in\mathcal{C}^\infty(I)$ with, for all $x,y,\in I$, 
\begin{align*}
\tilde{d}(x,y)=\tilde{D}(x-y),~\tilde{d}_i(x,y)=\tilde{D}_i(x-y),\\
\tilde{e}(x,y)=\tilde{E}(x-y),~\tilde{e}_i(x,y)=\tilde{E}_i(x-y).
\end{align*}
Then we may consistently define $D(x-y):=d(x,y)$ and $E(x-y):=e(x,y)$ and  
\begin{align*}
(d\star e)(x,y)&=\int_{-\infty}^{+\infty} d(x,\tau)e(\tau,y) d\tau,\\
& = \int_{-\infty}^{+\infty} D(x-\tau)E(\tau-y) d\tau,\\
&=(D\ast E)(x+y).
\end{align*}
In other terms a $\star$~product of two elements of $\D$ is a convolution if and only if these elements depend only on the difference between their two variables.

\textit{Volterra composition of the first kind.}
For two smooth functions $\tilde{f},\tilde{g}\in\Cc$, the Volterra composition of the first kind is defined as
\begin{equation*}
 \left(\tilde{f} \star_V \tilde{g}\right)(x,y) := \int_{y}^{x} \tilde{f}(x,\tau) \tilde{g}(\tau,y) d\tau,
 \end{equation*}
 for $x,y\in I$. As noted earlier Eq.~(\ref{Volt1}), a $\star$~product is a Volterra composition of the first kind if and only if the distributions multiplied are of the type $\tilde{f}(x,y)\Theta$.

\textit{Volterra composition of the second kind.}
For two smooth functions $\tilde{f},\tilde{g}\in C^\infty([0,1]^2)$, the Volterra composition of the second kind is defined as
\begin{equation*}
 \left(\tilde{f} \star_{V_{II}} \tilde{g}\right)(x,y) := \int_{0}^{1} \tilde{f}(x,\tau) \tilde{g}(\tau,y) d\tau.
 \end{equation*}
 This is a $\star_I$~product with $I=[0,1]$. Mikusi\'nski stated without proof that the Volterra composition of the second kind is a regular operation on the space of \emph{positive} smooth functions over $[0,1]^2$ \cite{Mikusiski1948SurLM}, now a consequence of Theorem~\ref{WellDefined}.

\textit{Pointwise product.} Let $\tilde{f},\tilde{g}\in \mathcal{C}^\infty(I)$, their ordinary pointwise product is $(\tilde{f}.\tilde{g})(x) =\tilde{f}(x)\tilde{g}(x)$. Now consider $f,g\in \D$ with $f=\tilde{f}(x)\delta$ and $g=\tilde{g}(x)\delta$. Then
$$
(f\star g)(x,y) = (\tilde{f}.\tilde{g})(x)\, \delta.
$$

\textit{Matrix product.} 
Let $x,y\in I$ and $\{x_i\in I\}_{0\leq i \leq N-1}$ with $x_0=x$ and $x_{N-1}=y$. For simplicity, suppose that the distance $|x_{i+1}-x_{i}|=\Delta x=1/N$ is the same for all $0\leq i\leq N-2$. This assumption is not necessary but alleviates the notation. 
 For $\tilde{f}\in \Cc$, we define a matrix $\mathsf{F}\in\mathbb{C}^{N\times N}$ with entries 
 $$
 \mathsf{F}_{i,j} := \tilde{f}(x_i,x_j)\Theta(x_i-x_j) .
 $$
  For example the matrix $\mathsf{H}\in \mathbb{C}^{N\times N}$ constructed from $\Theta$ is the \emph{lower triangular} matrix with $1$ on and under the diagonal. Any matrix $\mathsf{F}$ defined this way is lower triangular owing to the Heaviside step function. 
Constructing similarly another $\mathsf{G}\in\mathbb{C}^{N\times N}$ for $g(x,y)=\tilde{g}(x,y)\Theta(x-y)$, we observe that 
 \begin{equation*}
(\mathsf{F}.\mathsf{G})_{i,j}\times \Delta x=\sum_{x_j\leq x_k\leq x_i} \!\!\tilde{f}(x_i,x_k)\tilde{g}(x_k, x_j)\, \Delta x,
\end{equation*}
then,
\begin{equation*}
\lim_{\Delta x\to 0}(\mathsf{F}.\mathsf{G})_{0,N-1} \Delta x = \int_{y}^{x} \tilde{f}(x,\tau)\tilde{g}(\tau,y)d\tau\,\Theta(x-y)=(f\star g)(x,y).
  \end{equation*}
The above construction extends to all elements of $\D$ and maintains their algebraic structure: set $\delta$ in correspondence with $\mathsf{Id}_N/\Delta x$, $\mathsf{Id}_N$ being the identity matrix of size $N$; $\delta'$ is then in correspondence with the matrix inverse $\mathsf{H}^{-1}/\Delta x$ (because of \S\ref{MultIdent} below), $\delta''$ with $\mathsf{H}^{-2}/\Delta x^2$ and so on. 
This procedure provides a natural mean for numerical evaluations of $\star$~products via matrix-calculus. The accuracy of the resulting method is improved upon using other quadratures for integration (trapezoidal, Simpson etc.). An alternative approach consists of first expanding the smooth functions on a basis of Legendre polynomials and then multiply matrices of these coefficients to approximate $\star$~products \cite{pozza2023new}. 

The relation between $\star$~product and matrix product allows for an intuitive understanding of what $\D$ equipped with $\star$ is: a `continuum' version of the algebra of triangular matrices. Similarly the Fr\'echet Lie group of $\star$~invertible elements of $\D$,  constructed in Section \ref{FLgroup}, is a `continuum' version of the Borel subgroups formed by invertible triangular matrices. 
With this understanding, $\star$~products with elements of $\mathcal{C}^\infty(I)$ defined in \S\ref{InnOut} via $\psi_l$ and $\psi_r$ are the `continuum' versions of the column vector times matrix \eqref{rightvec} and matrix times line vector \eqref{leftvec} products, scalar product \eqref{inner} and outer products \eqref{leftouter}, \eqref{rightouter}, \eqref{outer}.

\textit{Derivation, integration and exponentiation.}
For any $\tilde{f}\in \Cc$, $\delta^{(n)}\star \tilde{f}$ and $\tilde{f}\star \delta^{(n)}$ are the $n$th derivatives with respect to the left and right variables of $\tilde{f}$, respectively. Furthermore, $\Theta\star \tilde{f}$ and $\tilde{f}\star \Theta$ are the left-variable and right-variable integrals of $\tilde{f}$, respectively; and more generally $\Theta^{\star n+1}\star \tilde{f}$ is equal to $(-1)^n/n!$ times the left-variable $n$th moment of $\tilde{f}$ while $ \tilde{f}\star \Theta^{\star n+1}$ is $1/n!$ times the right-variable $n$th moment of $\tilde{f}$. See also \S\ref{MultIdent} below. It follows from these facts that for a smooth function of a single variable $\tilde{h}\in \mathcal{C}^\infty(I)$
\[
\delta'\star \left(\exp(\tilde{h})\Theta\right) \star \left(\delta - \tilde{h} \Theta\right) = \delta,
\]
that is, exponentiation of a smooth function of a single variable is equivalent to taking a $\star$~resolvent of this function. This observation generalizes in many a ways, see \S\ref{MultIdent} for one way and here for another: if $\tilde{f}$ is replaced by an object that does not commute with itself in the $\star$~sense (as would typically be the case for a matrix of smooth functions), then its $\star$~resolvent yields a time-ordered (also known as path-ordered) exponential \cite{Giscard2015}. 

\begin{rem}
    Since $\overline{\mathcal{C}^{\infty}(I)}^{\otimes 2} \subset \overline{\Cc}$, the $\star$~product is also well defined on tensor products of distributions of one variable.
\end{rem}

	\section{Fr\'{e}chet Lie group on distributions} \label{FLgroup}
In this section we show that the $\star$~product induces the existence of a Fr\'{e}chet Lie group on a dense subset of $\D$, then show this Fr\'{e}chet Lie group is a subgroup of the automorphism group on $\Cc$. We begin with some multiplicative identities of $\D$, then provide this set with a metric, present existence and density results concerning $\star$~inverses and conclude with the Fr\'{e}chet Lie group structure.

From now on, we omit the $x-y$ arguments of the Heaviside $\Theta$ functions and Dirac deltas $\delta^{(i)}$ whenever
possible.

\subsection{Multiplicative identities}\label{MultIdent}

Since the product $f\star g$ reduces to a convolution when both $f$ and $g$ depend only on the difference between their variables, we immediately obtain the following (well known) identities involving some elements of $\D$. Firstly,
\begin{equation*}
\Theta\star \delta'=\delta,
\end{equation*}
which indicates that $\delta'^{\star-1}=\Theta$ and equivalently $\Theta^{\star-1}=\delta'=\delta^{(1)}$. Indeed, by Corollary~\ref{DinCb}, $\delta$ acts as the unit of the $\star$~product.
As a consequence, we may legitimately state $\Theta=\delta^{(-1)}$ and Definition~\ref{DefD} for $d\in\D$ is now
\begin{equation}\label{Dform}
		d(x,y)=\sum_{i=-1}^{+\infty} \tilde{d}_{i}(x,y) \delta^{(i)},
\end{equation}
where the sum starts at $i=-1$. 
Furthermore, we prove by Laplace transformation or directly by induction that, for $n\in\mathbb{N}\backslash\{0\}$,
\[
\big(\Theta^{\star n}\big)(x,y) = \frac{(x-y)^{n-1}}{(n-1)!}\Theta,
\]
meaning that $\Theta^{\star n}\propto \Theta$. Equivalently, this shows that all negative $\star$~powers of $\delta'$ are included in $\D$ and the sum in Eq.~(\ref{Dform}) above could run over $\mathbb{Z}$ just as well without changing $\D$. Conveniently,
\[
\delta^{(j)} = (\delta')^{\star j},
\]
which thus holds for $j\in\mathbb{Z}$, understanding that $\delta^{(-|j|)}:=(\delta')^{\star-|j|}=\Theta^{\star |j|}$.
We may therefore summarily write, for all $i,j\in\mathbb{Z}$,$$
\delta^{(i)}\star \delta^{(j)}=(\delta')^{\star i}\star (\delta')^{\star j}=\delta^{\star\, i+j}=\delta^{(i+j)} .
$$
As stated earlier, these results follow from the reduction of the $\star$~product to convolutions $\ast$. 
There are more general $\star$ identities when this is not the case. To present some examples of these, let $\tilde{f}\in\Cc$, and denote 
\(
\tilde{f}^{(k,\ell)}(\tau,\rho)
\)
the $k$th $x$-derivative and $\ell$th $y$-derivative of $\tilde{f}$ evaluated at $x=\tau$, $y=\rho$ with the conventions that $k=0$ or $\ell=0$ means no derivative is taken and $k=-1$ or $\ell=-1$ denotes integration. By associativity of the $\star$~product, $(\delta^{(k)}\star \tilde{f})\star \delta^{(\ell)}=\delta^{(k)}\star (\tilde{f}\star \delta^{(\ell)})=\tilde{f}^{(k,l)}$ is well defined.
Schwartz's results \cite[eqs. II,1; 5--7, p. 35]{schwartz1978} imply, for any $i,j\geq -1$ that \cite{GiscardPozzaConvergence},
 \begin{subequations}
\begin{align}
\delta^{(j)}\star \big(\tilde{f}(x,y)\delta^{(i)}\big) &= \tilde{f}^{(j,0)}(x,y)\delta^{(i)}+\sum_{k=1}^{j}\tilde{f}^{(j-k,0)}(y,y)\delta^{(i+k)},\label{leftaction}\\ 
\big(\tilde{f}(x,y)\delta^{(i)}\big)\star \delta^{(j)} &= (-1)^j\tilde{f}^{(0,j)}(x,y)\delta^{(i)}+\sum_{k=1}^{j}(-1)^{j+k}\tilde{f}^{(0,j-k)}(x,x)\delta^{(i+k)}.\label{rightaction}
\end{align}
\end{subequations}
Notice that the smooth function's partial derivatives are evaluated in $(y,y)$ and $(x,x)$ in the sums above, but not in the first term.
Finally, let us also present an example involving a $\star$~inverse that is not deducible from convolutions. For $\tilde{a},\, 
\tilde{b}\in\mathcal{C}^\infty(I)$,
\[
\left(\delta-\tilde{a}(x)\tilde{b}(y)\Theta\right)^{\star-1} =
 \delta+\tilde{a}(x)\tilde{b}(y)\exp\left(\int_y^{x} \tilde{a}(\tau)\tilde{b}(\tau) d\tau\right)\Theta, 
\]
which we verify directly by $\star$ multiplying the right-hand side with $\delta -\tilde{a}(x)\tilde{b}(y)\Theta$.
Further explicit results on $\star$~inverses and $\star$~multiplications are presented in \cite{GiscardPozza20201}, see also \S\ref{InvSection} below.

\begin{rem}
In general some care is required when representing elements of $\D$ and evaluating $\star$~products.
For instance, consider $\tilde{f}\in\Cc$ such that 
\[
\exists k\in\mathbb{N}:\, \forall j_1,j_2\in\mathbb{N}, j_1+j_2\leq k,\,\tilde{f}^{(j_1,j_2)}(x,x)=0.
\]
 Then $\tilde{f}(x,y)\delta^{(k)} \equiv 0_\D$ is null both as a linear functional $\Cc\to\mathbb{C}$ whose action is defined by the bracket of  Eq.~(\ref{FunctionalActionCI2}) and as endomorphism of $\Cb$, something which is not readily apparent from the notation alone. To further illustrate the notational difficulties consider calculating $\big(\tilde{f}(x,y)\delta'\big)\star \big(\tilde{g}(x,y)\delta'\big)$. Relying on the $\star$~action of the leftmost $\delta'$, Eq.~(\ref{leftaction}) yields
\begin{align}\label{LeftActionEx}
&\big(\tilde{f}(x,y)\delta'\big)\star \big(\tilde{g}(x,y)\delta'\big)=\\&\hspace{7mm}\tilde{f}^{(0,1)}(x,x)\tilde{g}(x,y)\delta'+\tilde{f}(x,x)\tilde{g}^{(1,0)}(x,y)\delta'+\tilde{f}(x,x)\tilde{g}(x,y)\delta^{(2)}.\nonumber
\end{align}
But we could equally well calculate this relying on the $\star$~action of the rightmost $\delta'$. Then Eq.~(\ref{rightaction}) gives
\begin{align}\label{RightActionEx}
&\big(\tilde{f}(x,y)\delta'\big)\star \big(\tilde{g}(x,y)\delta'\big)=\\&\hspace{7mm}-\tilde{f}^{(0,1)}(x,y)\tilde{g}(y,y)\delta'-\tilde{f}(x,y)\tilde{g}^{(1,0)}(y,y)\delta'+\tilde{f}(x,y)\tilde{g}(y,y)\delta^{(2)}.\nonumber
\end{align}
In spite of appearances the two results are equal: their actions as linear functionals $\mathcal{C}^\infty(I^2)\to \mathbb{C}$ are the same on any test function, and their compositions with any endomorphism of $\Cc$ is the same. This is because by \cite[eqs. II,1; 5–7, p. 35]{schwartz1978} for any smooth function $\tilde{h}$ and $k\in\mathbb{N}$,
\begin{align}\label{RepresentationsD}
\tilde{h}(x)\delta^{(k)}&=(-1)^k \big(\tilde{h}(y)\delta\big)^{(0,k)},\\
\tilde{h}(y)\delta^{(k)}&= \big(\tilde{h}(x)\delta\big)^{(k,0)},\nonumber
\end{align}
so defining e.g. $\tilde{h}(\tau):=\tilde{f}^{(0,1)}(x,\tau)\tilde{g}(\tau,y)$ and so on, one turns Eq.~(\ref{RightActionEx}) into Eq.~(\ref{LeftActionEx}).
\end{rem}

 \subsection{$\D$ is a Fr\'{e}chet space}\label{FrechetD}
The aim of this is to show that $\D$ is a metrizable space in the particular sense of Fr\'{e}chet spaces. 
We begin by defining a sequence of seminorms $(p_k)_k$ on $\D$ inducing a metric $\mathrm{d}$ on $\D$.

Let  $(K_k)_k$ be a sequence of compact spaces such that:\\[-1.8em] 
\begin{enumerate}
    \item $\forall i \in \N$, $ K_k \subset \overset{\circ}{K}_{k+1} $, where $\overset{\circ}{K}_{k+1}$ denotes the interior of the set $K_{k+1}$.
    \item \( \bigcup_{k=0}^{+\infty} K_k = \R^n,\)
    \item For any $K\subset \mathbb{R}^n$ compact, there exists \( k \in \mathbb{N} \) such that \( K \subset K_k\).
\end{enumerate}
Then $\mathcal{C}^{\infty}(\R^n)$ is a Fr\'{e}chet space when equipped with the sequence of seminorms 
\[
\tilde{p}_k(f) :=  \sup_{|\alpha| \leq k} \sup_{x \in K_k} |\partial^{\alpha} f(x)|=  \sup_{|\alpha| \leq k}  \Vert \partial^{\alpha} f \Vert_{\infty, K_k} ,
\]
where $\alpha$ is a multi-index. We generalise this construction to $\D$ by associating an element of $\D$ with a sequence of smooth functions.
	\begin{prop}
	Let \(d(x,y)=\sum_{i=-1}^{+\infty} \tilde{d_i}(x,y) \delta^{(i)}(x-y)\in \D
	\), $(K_k)_{k}$ a sequence of compacts of $\R^2$ as defined above, $\alpha=(\alpha_1,\alpha_2) \in \N^2$ a bi-index such that $\vert \alpha \vert = \alpha_1 + \alpha_2$ and let $\partial^\alpha := \frac{\partial^{\alpha_1}}{\partial x^{\alpha_1}} \frac{\partial^{\alpha_2}}{\partial y^{\alpha_2}}$.
	The sequence $(p_k)_{k\in\mathbb{N}\cup \{-1\}}$ of maps $p_k:\D\to\mathbb{R}^+$ defined by
	$$
	p_k(d) := \sup_{-1 \leq i \leq k} \tilde{p}_{k+1}(d_i) =\sup_{-1 \leq i \leq k} \sup_{|\alpha| \leq k+1} \Vert \partial^{\alpha} \tilde{d_i} \Vert_{\infty,K_{k+1}},
	$$
	is an increasing sequence of seminorms and the map $\mathrm{d}:\D^2\to\mathbb{R}^+$, defined for $d,e\in\D$ by 
	\[
 \mathrm{d}(d,e):=\sum_{k=-1}^{\infty} \frac{1}{2^{k+1}} \frac{p_{k}(d-e)}{1+p_{k}(d-e)},
	\]
 is a metric on $\D$.
 \end{prop}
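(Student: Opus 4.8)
The plan is to reduce the statement to the facts recalled just above — that each $\tilde p_{k}$ is a seminorm on $\mathcal{C}^\infty(\R^2)$ and that the sequence $(\tilde p_k)_k$ is increasing — together with two elementary observations: a finite supremum of seminorms is again a seminorm, and the map $\phi\colon[0,\infty)\to[0,1)$ given by $\phi(t)=t/(1+t)$ is increasing and subadditive, i.e. $\phi(s+t)\le\phi(s)+\phi(t)$ for $s,t\ge 0$. Throughout, $\D$ is understood with coefficients $\tilde d_i\in\mathcal{C}^\infty(\R^2)$ (as noted in the proof of Corollary~\ref{CorStarR}), so that every $\tilde p_{k+1}(\tilde d_i)$ is a finite number, and $\D$ is regarded as a complex vector space under coefficient-wise operations: for $d=\sum_{i\ge-1}\tilde d_i\delta^{(i)}$ and $e=\sum_{i\ge-1}\tilde e_i\delta^{(i)}$ we set $d-e=\sum_{i\ge-1}(\tilde d_i-\tilde e_i)\delta^{(i)}\in\D$.

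First I would verify that each $p_k$ is a seminorm. Non-negativity is immediate, and since $p_k(d)=\sup_{-1\le i\le k}\tilde p_{k+1}(\tilde d_i)$ is a finite supremum, absolute homogeneity $p_k(\lambda d)=|\lambda|\,p_k(d)$ and the triangle inequality $p_k(d+e)\le p_k(d)+p_k(e)$ follow termwise from the corresponding properties of the seminorm $\tilde p_{k+1}$. For monotonicity of $(p_k)_{k\ge-1}$, passing from $p_k$ to $p_{k+1}$ does two things, each of which can only increase the value: it replaces $\tilde p_{k+1}$ by the larger seminorm $\tilde p_{k+2}$ (larger because $K_{k+1}\subset K_{k+2}$ and the bound $|\alpha|\le k+1$ is relaxed to $|\alpha|\le k+2$), and it enlarges the range of the outer supremum from $-1\le i\le k$ to $-1\le i\le k+1$. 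Hence $p_k\le p_{k+1}$.

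Next I would check that $\mathrm d$ is a metric. The series defining $\mathrm d$ converges and is bounded by $\sum_{k\ge-1}2^{-(k+1)}=2$ because $0\le\phi\le1$. Symmetry holds since $p_k(d-e)=p_k(e-d)$ (absolute homogeneity with $\lambda=-1$), and $\mathrm d(d,d)=0$ since $p_k(0)=0$. For the triangle inequality, combine the seminorm inequality $p_k(d-f)\le p_k(d-e)+p_k(e-f)$ with the monotonicity and subadditivity of $\phi$ to get $\phi\big(p_k(d-f)\big)\le\phi\big(p_k(d-e)\big)+\phi\big(p_k(e-f)\big)$ for every $k$; multiplying by $2^{-(k+1)}$ and summing yields $\mathrm d(d,f)\le\mathrm d(d,e)+\mathrm d(e,f)$. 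Finally, $\mathrm d(d,e)=0$ forces $p_k(d-e)=0$ for all $k$; fixing $i\ge-1$ and taking $k\ge i$, this gives $\partial^\alpha(\tilde d_i-\tilde e_i)=0$ on $K_{k+1}$ for all $|\alpha|\le k+1$, and letting $k\to\infty$ (so that $\bigcup_kK_k=\R^2$ and every order $\alpha$ is eventually reached) shows $\tilde d_i=\tilde e_i$ on $\R^2$ for each $i$; thus the two coefficient families coincide.

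The step I expect to require the most care is this last implication, because by the identities~\eqref{RepresentationsD} the writing $d=\sum_{i\ge-1}\tilde d_i\delta^{(i)}$ is \emph{not} unique: two distinct coefficient families can represent the same element of $\D$, so ``equal coefficient families'' must be promoted to ``equal elements of $\D$'', and conversely one must check that $p_k$ — and hence $\mathrm d$ — does not depend on which representative of $d$ is chosen. I would handle this by fixing, once and for all, a canonical representative for each element of $\D$ (for instance, using~\eqref{RepresentationsD} together with a Taylor expansion on the diagonal $x=y$, one may normalise each coefficient $\tilde d_i(x,y)$ to be a function of the single variable $x$), and defining $p_k$ through that representative; alternatively, one shows directly that $\mathrm d(d,e)=0$ implies equality of $d$ and $e$ as endomorphisms of $\Cb$ (equivalently as linear functionals on $\mathcal{C}^\infty(I^2)$), which is the meaning of equality in $\D$. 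With such a convention in place, all of the verifications above go through unchanged and $\mathrm d$ is a genuine metric on $\D$.
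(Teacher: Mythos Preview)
Your argument is correct and follows the same route as the paper: verify that each $p_k$ is a seminorm, that the sequence is increasing, and then invoke the standard construction of a metric from a countable family of seminorms. The paper's own proof is much terser --- it dispatches all three steps in three lines and outsources the metric construction to \cite{bony, zuilyqueff} --- whereas you spell out the subadditivity of $t\mapsto t/(1+t)$ and the separation argument explicitly.

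Your final paragraph on the non-uniqueness of the representation $d=\sum_{i\ge-1}\tilde d_i\delta^{(i)}$ raises a point the paper does not address in its proof. The paper appears to treat $\D$ tacitly as the space of coefficient sequences $(\tilde d_i)_{i\ge-1}$ (so that $p_k$ is defined on the representative and separation is automatic), even though its own Remark on Eqs.~\eqref{LeftActionEx}--\eqref{RepresentationsD} shows that distinct sequences can yield the same endomorphism of $\Cb$. Your proposal to fix a canonical representative (or to check that $p_k$ is representation-independent) is the right instinct, and is something the paper leaves implicit.
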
 
 \begin{proof}
Let $k \in \N \cup \{-1\}$. Since $\Vert \cdot \Vert_{\infty}$ is a norm on $\Cc$, every $p_k$ is a semi-norm on $\D$. The sequence $(p_k)_{k\in\mathbb{N}\cup \{-1\}}$ is an increasing sequence by construction, so it is an increasing sequence of seminorms. 
Therefore, $\mathrm{d}$ is a metric \cite{bony, zuilyqueff}.
 \end{proof}
 \begin{thm}
     Relatively to $\mathrm{d}$, $\D$ is a complete space and therefore a Fr\'{e}chet space. 
 \end{thm}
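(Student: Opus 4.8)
The plan is to show that $(\D,\mathrm{d})$ is complete by reducing Cauchy sequences in $\D$ to Cauchy sequences of the coefficient functions $\tilde d_i$ in the Fr\'echet space $\mathcal{C}^\infty(\R^2)$ (equivalently $\Cc$ with its standard seminorms $\tilde p_k$), exploiting the fact that the metric $\mathrm{d}$ was built precisely so that $\mathrm{d}$-convergence is equivalent to $p_k$-convergence for every $k$, and each $p_k$ controls the seminorms $\tilde p_{k+1}$ of the coefficients $\tilde d_i$ for $-1\le i\le k$.

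First I would unpack the definition: let $(d^{(n)})_n$ be a $\mathrm{d}$-Cauchy sequence in $\D$, writing $d^{(n)}(x,y)=\sum_{i=-1}^{\infty}\tilde d^{(n)}_i(x,y)\,\delta^{(i)}$. Using the standard fact that $\mathrm{d}(d^{(n)},d^{(m)})\to 0$ if and only if $p_k(d^{(n)}-d^{(m)})\to 0$ for each fixed $k$ (the $2^{-(k+1)}$ weighting and the $t\mapsto t/(1+t)$ truncation make this routine), I would fix $i\ge -1$ and any multi-index $\alpha$; choosing $k\ge\max(i,|\alpha|-1)$ gives $\|\partial^\alpha(\tilde d^{(n)}_i-\tilde d^{(m)}_i)\|_{\infty,K_{k+1}}\le p_k(d^{(n)}-d^{(m)})\to 0$. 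Hence for each $i$ the sequence $(\tilde d^{(n)}_i)_n$ is Cauchy in $\mathcal{C}^\infty(\R^2)$ (or $\Cc$), which is complete, so it converges to some $\tilde d_i\in\Cc$, uniformly with all derivatives on every $K_{k+1}$. Define the candidate limit $d(x,y):=\sum_{i=-1}^{\infty}\tilde d_i(x,y)\,\delta^{(i)}$. Two things then need checking: that $d\in\D$ — which, given Definition~\ref{DefD} places no summability constraint on the coefficients beyond each being in $\Cc$, is immediate once each $\tilde d_i\in\Cc$ — and that $d^{(n)}\to d$ in the metric $\mathrm{d}$.

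For the convergence $d^{(n)}\to d$, I would again argue $p_k$ by $p_k$: fix $k$ and $\varepsilon>0$. Since $p_k(d^{(n)}-d)=\sup_{-1\le i\le k}\sup_{|\alpha|\le k+1}\|\partial^\alpha(\tilde d^{(n)}_i-\tilde d_i)\|_{\infty,K_{k+1}}$ is a supremum over the \emph{finitely many} indices $-1\le i\le k$ of quantities each tending to $0$ (by the coefficientwise convergence established above), we get $p_k(d^{(n)}-d)\to 0$ as $n\to\infty$. As this holds for every $k$, the equivalence of $p_k$-convergence with $\mathrm{d}$-convergence yields $\mathrm{d}(d^{(n)},d)\to0$. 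Thus every Cauchy sequence converges in $\D$, so $(\D,\mathrm{d})$ is complete; together with the previous proposition that $\mathrm{d}$ is a metric induced by a countable increasing family of seminorms, this makes $\D$ a Fr\'echet space.

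The only genuinely delicate point is the interaction between the infinite sum defining elements of $\D$ and the seminorms: one must be sure that $p_k$ really only ever sees finitely many coefficients (it does, by construction: $p_k$ caps the index at $i\le k$), so that no issue of summing infinitely many error terms arises, and conversely that the candidate limit $d$ is a bona fide element of $\D$ rather than something requiring a convergence hypothesis on $\sum_i \tilde d_i\delta^{(i)}$ — but Definition~\ref{DefD} and the accompanying remark explicitly allow infinitely many nonzero coefficients with convergence understood exactly in this $\mathrm{d}$-sense, so there is nothing further to verify. Hence the main (modest) obstacle is purely bookkeeping: making the equivalence ``$\mathrm{d}$-Cauchy $\iff$ each $p_k$-Cauchy'' precise and invoking completeness of $\mathcal{C}^\infty(\R^2)$ coefficientwise.
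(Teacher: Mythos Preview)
Your proposal is correct and follows essentially the same route as the paper: extract from the $\mathrm{d}$-Cauchy condition that each coefficient sequence $(\tilde d^{(n)}_i)_n$ is Cauchy in the Fr\'echet space of smooth functions, invoke completeness there to obtain limits $\tilde d_i$, assemble $d=\sum_i\tilde d_i\delta^{(i)}$, and verify $\mathrm{d}(d^{(n)},d)\to0$. If anything, your write-up is more explicit than the paper's at the final step, where you note that $p_k$ involves only finitely many indices $i$ so the supremum tends to~$0$.
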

 \begin{proof}
	Let $(d_n) \in \D$ be a Cauchy sequence 
	\[
	\forall \epsilon > 0,\,\, \exists n_0 \in \N,\,\, \forall m,p \geq n_0\quad \mathrm{d}(d_m,d_p) < \epsilon, 
	\]
	that is,
	\[
	\sum_{k=-1}^{\infty} \frac{1}{2^{k+1}} \frac{\sup_{-1 \leq i \leq k} \sup_{|\alpha| \leq k+1} \Vert \partial^{\alpha} \tilde{d}_{m,i}-\partial^{\alpha} \tilde{d}_{p,i} \Vert_{\infty,K_{k+1}}}{1+\sup_{-1 \leq i \leq k} \sup_{|\alpha| \leq k+1} \Vert \partial^{\alpha} \tilde{d}_{m,i}-\partial^{\alpha} \tilde{d}_{p,i} \Vert_{\infty,K_{k+1}}} < \epsilon.
	\]
	In particular, for all $-1 \leq i \leq k$ fixed, 
	\[
	\sup_{|\alpha| \leq k+1} \Vert \partial^{\alpha} \tilde{d}_{m,i}-\partial^{\alpha} \tilde{d}_{p,i} \Vert_{\infty,K_{k+1}} \leq \sup_{-1 \leq i \leq k} \sup_{|\alpha| \leq k+1} \Vert \partial^{\alpha} \tilde{d}_{m,i}-\partial^{\alpha} \tilde{d}_{p,i} \Vert_{\infty,K_{k+1}}.
	\]
	Since  $x \mapsto \frac{x}{x+1}$ is a strictly increasing function,
	\[
	\frac{ \sup_{|\alpha| \leq k+1} \Vert \partial^{\alpha} \tilde{d}_{m,i}-\partial^{\alpha} \tilde{d}_{p,i} \Vert_{\infty,K_{k+1}}}{1+ \sup_{|\alpha| \leq k+1} \Vert \partial^{\alpha} \tilde{d}_{m,i}-\partial^{\alpha} \tilde{d}_{p,i} \Vert_{\infty,K_{k+1}}} < \frac{\sup_{-1 \leq i \leq k} \sup_{|\alpha| \leq k+1} \Vert \partial^{\alpha} \tilde{d}_{m,i}-\partial^{\alpha} \tilde{d}_{p,i} \Vert_{\infty,K_{k+1}}}{1+\sup_{-1 \leq i \leq k} \sup_{|\alpha| \leq k+1} \Vert \partial^{\alpha} \tilde{d}_{m,i}-\partial^{\alpha} \tilde{d}_{p,i} \Vert_{\infty,K_{k+1}}}.
	\]
	Hence, 
		\[
	\sum_{k=-1}^{\infty} \frac{1}{2^{k+1}} \frac{ \sup_{|\alpha| \leq k+1} \Vert \partial^{\alpha} \tilde{d}_{m,i}-\partial^{\alpha} \tilde{d}_{p,i} \Vert_{\infty,K_{k+1}}}{1+ \sup_{|\alpha| \leq k} \Vert \partial^{\alpha} \tilde{d}_{m,i}-\partial^{\alpha} \tilde{d}_{p,i} \Vert_{\infty,K_{k+1}}} < \epsilon.
	\]
	We set 
 \begin{align*}
 \tilde{\mathrm{d}}(f_m,f_p) &:= \sum_{k=-1}^{\infty} \frac{1}{2^{k+1}} \frac{\sup_{|\alpha| \leq k+1} \Vert \partial^{\alpha} \tilde{f}_{m,i}-\partial^{\alpha} \tilde{f}_{p,i} \Vert_{\infty,K_{k+1}}}{1+ \sup_{|\alpha| \leq k+1} \partial^{\alpha} \tilde{d}_{m,i}-\partial^{\alpha} \tilde{d}_{p,i} \Vert_{\infty,K_{k+1}}}  \\
 & := \sum_{k=0}^{\infty} \frac{1}{2^{k}} \frac{\tilde{p}_k(f_m-f_p)}{1 + \tilde{p}_k(f_m - f_p)}.  
 \end{align*}
	Since $(\mathcal{C}^{\infty}(I^2),\tilde{\mathrm{d}})$ is a  complete metric space, for all $i \in \N \cup \{-1\}$, there exists a function $\tilde{d_i} \in \mathcal{C}^{\infty}(I^2)$ such that
	\[
	\tilde{\mathrm{d}}(\tilde{d}_{m,i},\tilde{d_i}) \underset{m \to \infty}{\longrightarrow}0.
	\]
	Then, we define 
	\[
d(x,y):=\sum_{i=-1}^{+\infty} \tilde{d_i}(x,y) \delta^{(i)}(x-y),
\]
and it follows that
	\[
\mathrm{d}(d_{m},d) \underset{m \to \infty}{\longrightarrow}0.
\]
\end{proof}

 \subsection{Group structure on the $\star$~invertible elements of $\D$.}\label{InvSection}
 \begin{thm}
The set \(\mathop{\rm Inv}(\D)\) of $\star$~invertible elements of $\D$ is a dense subset of $\D$.
 \end{thm}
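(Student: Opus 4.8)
The plan is to exhibit, for an arbitrary $d\in\mathcal{D}$ and arbitrary $\epsilon>0$, an invertible element $e\in\mathrm{Inv}(\mathcal{D})$ with $\mathrm{d}(d,e)<\epsilon$. Writing $d=\sum_{i=-1}^{+\infty}\tilde d_i(x,y)\delta^{(i)}$ as in Eq.~(\ref{Dform}), the key observation is that invertibility of $d$ is controlled by its ``leading'' coefficients in the $\delta^{(i)}$-expansion. Using the multiplicative identities of \S\ref{MultIdent}, in particular Eqs.~(\ref{leftaction})--(\ref{rightaction}) and $\delta^{(i)}\star\delta^{(j)}=\delta^{(i+j)}$, one sees that when the expansion of $d$ starts at index $i_0$ (i.e.\ $\tilde d_i\equiv 0_{\mathcal D}$ for $i<i_0$ and $\tilde d_{i_0}\not\equiv 0_{\mathcal D}$), a $\star$-inverse — if it exists — must start at index $-i_0$, and the equation $d\star e=\delta$ can be solved coefficient-by-coefficient for the $\tilde e_j$ provided the coefficient $\tilde d_{i_0}(x,x)$ (the restriction to the diagonal, which is what enters the recursion through the evaluations at $(x,x)$ and $(y,y)$ in Eqs.~(\ref{leftaction})--(\ref{rightaction})) is nowhere vanishing. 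Concretely: first I would treat the generic case $i_0=0$, where $d=\tilde d_0(x,y)\delta+(\text{higher order})+(\text{a }\Theta\text{ term})$, and show that if $\tilde d_0(x,x)\neq 0$ for all $x$ then $d$ is $\star$-invertible, by constructing the inverse as a formal $\star$-power series whose coefficients are determined recursively and lie in $\mathcal{C}^\infty(I^2)$, the series converging in the Fréchet topology of $\mathcal D$ (here one uses that the seminorms $p_k$ only see finitely many coefficients, so convergence is coefficientwise on each $p_k$, plus the completeness just established).

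The reduction of the general case to the generic one goes through $\star$-multiplication by powers of $\delta'$: if $d$ starts at index $i_0>0$ then $\Theta^{\star i_0}\star d=\delta^{(-i_0)}\star d$ starts at index $0$, and $d$ is invertible iff $\delta^{(-i_0)}\star d$ is, since $\delta^{(-i_0)}=\Theta^{\star i_0}\in\mathrm{Inv}(\mathcal D)$ with inverse $\delta^{(i_0)}=(\delta')^{\star i_0}$; symmetrically for $i_0<0$ one multiplies by $(\delta')^{\star|i_0|}$. So the substantive statement is really about the $i_0=0$ stratum.

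For density, given $d=\sum_{i\ge i_0}\tilde d_i\delta^{(i)}$ I would perturb it to make the relevant diagonal coefficient non-vanishing. If $\tilde d_{i_0}(x,x)$ already has no zero on $I$ we are done ($d$ itself is invertible, modulo the reduction above). Otherwise, note that for $\lambda\in\mathbb C$ the element $d+\lambda\,\delta^{(i_0)}=\big(\tilde d_{i_0}(x,y)+\lambda\big)\delta^{(i_0)}+\sum_{i>i_0}\tilde d_i\delta^{(i)}$ has diagonal leading coefficient $\tilde d_{i_0}(x,x)+\lambda$; since $\tilde d_{i_0}(\cdot,\cdot)$ is continuous on the compact $I^2$ its diagonal takes values in a compact subset of $\mathbb C$, so for all sufficiently small $\lambda\neq0$ outside this (at most) one-real-parameter-family of bad values — more carefully, for all $\lambda$ in a punctured neighbourhood of $0$ avoiding the compact set $-\tilde d_{i_0}(\{(x,x):x\in I\})$, which is possible because a compact set in $\mathbb C$ not containing $0$ as an interior point still leaves arbitrarily small $\lambda$ available, and if $0$ were interior we could instead perturb by $\lambda\cdot g(x)g(y)\delta^{(i_0)}$ for a suitable $g$ — the perturbed element is invertible. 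Finally $\mathrm{d}(d,d+\lambda\delta^{(i_0)})\to 0$ as $\lambda\to 0$ because only finitely many seminorms $p_k$ are affected and each depends continuously on $\lambda$ (indeed $p_k(\lambda\delta^{(i_0)})\le|\lambda|$ once $k\ge i_0$ and $=0$ otherwise, recalling the constant function's seminorms).

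The main obstacle I anticipate is \emph{not} the density perturbation but the invertibility criterion itself: proving rigorously that the formally-constructed $\star$-power-series inverse (i) has all coefficients in $\mathcal{C}^\infty(I^2)$ — this requires care because the recursion in Eqs.~(\ref{leftaction})--(\ref{rightaction}) involves division by $\tilde d_{i_0}(x,x)$ and restrictions to the diagonal, and one must check smoothness is preserved, presumably by writing the inverse explicitly via a Neumann-type series $\delta/\tilde d_0 \star\sum_{n\ge0}(-1)^n(r)^{\star n}$ with $r$ the ``remainder'' and bounding its $\star$-powers — and (ii) actually converges in $(\mathcal D,\mathrm d)$, for which the earlier completeness theorem plus the finite-support-in-$k$ property of the seminorms should suffice, but the bookkeeping relating the $\delta^{(i)}$-degree of $r^{\star n}$ to $n$ needs to be pinned down so that each fixed coefficient $\tilde e_j$ is a \emph{finite} sum. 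I would isolate this as a lemma (``$d$ with $\tilde d_0(x,x)$ nowhere zero is $\star$-invertible'') proved first, then deduce the theorem.
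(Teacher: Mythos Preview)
Your approach is genuinely different from the paper's and, as written, contains a real gap. The crucial error is in the reduction step: you treat the $\delta^{(i)}$ as if they gave $\mathcal D$ the structure of a \emph{graded} algebra, so that $\star$-multiplying by $\Theta^{\star i_0}=\delta^{(-i_0)}$ (resp.\ $(\delta')^{\star|i_0|}$) simply shifts the lowest nonzero index. It does not. With non-constant coefficients, the identities~(\ref{leftaction})--(\ref{rightaction}) produce terms at a whole range of indices; for instance a direct computation gives
\[
\Theta\star\bigl(\tilde f(x,y)\,\delta'\bigr)=\tilde f(y,y)\,\delta-\tilde f^{(1,0)}(y,y)\,\Theta,
\]
so even for $i_0=1$ the product $\Theta\star d$ acquires a $\Theta$ term and does \emph{not} start at index~$0$. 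Symmetrically, $\delta'\star(\tilde d_{-1}\Theta)=\tilde d_{-1}^{(1,0)}\Theta+\tilde d_{-1}(y,y)\,\delta$ still sits at index~$-1$. The only well-behaved filtration is by \emph{highest} index (order), not lowest; but general elements of $\mathcal D$ need not have finite order, so your ``leading coefficient on the diagonal'' criterion cannot be formulated at the bottom of the expansion. A second issue is the Neumann series itself: for $d=\delta+\tilde d_1\delta'$ one finds that $(\tilde d_1\delta')^{\star n}$ contributes to the $\delta'$-coefficient for \emph{every} $n\ge1$, with terms involving derivatives of $\tilde d_1$ of growing order, so convergence of each fixed coefficient $\tilde e_j$ is far from automatic and can fail for non-analytic smooth $\tilde d_1$.

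The paper's proof proceeds quite differently. It first restricts to \emph{finite-order} elements (dense by construction of the seminorms) and among those to \emph{separable} ones, i.e.\ with coefficients in $\mathcal C^\infty(I)\otimes\mathcal C^\infty(I)$; density of these in $\mathcal C^\infty(I^2)$ is Stone--Weierstrass. For a separable $d\in\mathcal D^{(k)}$ the reduction uses the \emph{highest} index: one shows $e:=d\star\Theta^{\star(k+1)}$ is of the form $\tilde e\,\Theta$ with $\tilde e$ separable, and then invokes an external result (Giscard--Pozza, Lemma~\ref{SmThetaInv}) that such elements are $\star$-invertible. Your perturbation-and-Neumann-series idea could perhaps be repaired by working with the top index on finite-order truncations, but as stated the lowest-index grading picture does not hold.
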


 \begin{proof}
     The proof relies on earlier constructive results by Giscard and Pozza \cite{GiscardPozza20201} regarding the existence of $\star$~inverses of certain elements of $\D$. We begin by recalling the necessary definitions. 
     \begin{defn}\label{separable}
         A smooth function $\tilde{f}(x,y)\in \Cc$ is said to be separable of finite order $s\in\mathbb{N}$ if and only if there exist ordinary smooth functions $\tilde{a}_i$ and $\tilde{b}_i$ with
$$
    \tilde{f}(x,y)=\sum_{i=1}^{s}\tilde{a}_i(x)\tilde{b}_i(y).
$$
     \end{defn}
     Then an element $d=\sum_{i=-1}^{+\infty} \tilde{d}_i(x,y) \delta^{(i)}\in \D$ is said to be separable if and only if all of its smooth coefficients $\tilde{d}^{(i)}$ are separable in the sense of Definition~\ref{separable}.  
\begin{lem}[Giscard, Pozza \cite{GiscardPozza20201}]\label{SmThetaInv}
Let $e\in \D$ be of the form $e(x,y)=\tilde{e}(x,y)\Theta(x-y)$ with a separable function $\tilde{e}\in\Cc$ that is not identically zero over $I^2$. Then $e^{\star-1}(x,y)$ exists everywhere in $I^2$ except possibly at a finite set of isolated points.
\end{lem}

Furthermore, a generic explicit (although involved) formula for the $\star$~inverse of such an element of $\mathcal{D}$ is presented in \cite{GiscardPozza20201}. This lemma  is sufficient to guarantee the $\star$~invertibility of more general elements of $\mathcal{D}$. Indeed, consider $d\in\mathcal{D}$ such that $\exists k\in\mathbb{N}\cup\{-1\}$ with $\tilde{d}_k\neq0$ and for all $j>k$ we have $ \tilde{d}_{j}=0\,$. We say that $d$ is of order $k$ and denote $\mathcal{D}^{(k)}$  the set of elements of $\mathcal{D}$ of order $k$.
Then, for any $k'\geq k$, $e:=d\star \Theta^{\star k'+1}$ is proportional to $\Theta$. If $e$ is separable, then by Lemma~\ref{SmThetaInv} it is invertible and, from there, so is $d$. It turns out that the separability of $d$ implies that $e$ is separable as well \cite{GiscardPozza20201}, herefore
     \begin{thm}[Giscard, Pozza \cite{GiscardPozza20201}]
  Let $d \in \D^{(k)}$ be separable. 
  Then the $\star$~inverse of $d$ exists and can be expressed as
  \[ d^{\star-1}(x,y) = \Theta^{\star(k+1)} \star e^{\star-1}(x,y), \]
  where $e(x,y):=\tilde{e}(x,y)\Theta(x-y)$ is separable and invertible by Lemma~\ref{SmThetaInv}.
  Furthermore, $d^{\star-1}$ is separable.
\end{thm}
Observe that a separable function of $\Cc$ is an element of $\mathcal{C}^\infty(I)\otimes\, \mathcal{C}^\infty(I)$ and this set is dense in  $\Cc$ by the Stone--Weierstrass theorem  since $I$ is compact. Then, by linearity, for any $k\in\mathbb{N}\cup\{-1\}$ the set of separable elements of $\D$ of order $k$ is a dense subset of $\D^{(k)}$. It follows that the set of separable elements of $\D$ of finite order is dense in the set of elements of $\D$ of finite order. We conclude by noting that for any $d\in\D$, the sequence $\big(d|_k:=\sum_{i=-1}^k \tilde{d}_i \delta^{(i)}\big)_k$ converges to $d$ with respect to $\mathsf{d}$ as $k\to\infty$. This implies that the set of elements of $\D$ of finite order is dense in $\D$.
 \end{proof}
 \begin{thm}\label{AutThm}
 $$\mathop{\rm Inv}(\mathcal{D})\subseteq \mathop{\rm Aut}(\mathcal{C}^{\infty}(I^2,\C)).$$
 \end{thm}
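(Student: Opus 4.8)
The plan is to show that every $\star$-invertible element $d \in \mathop{\rm Inv}(\D)$ defines, via the map $\varphi \mapsto d \star \varphi$, a continuous linear bijection of $\mathcal{C}^\infty(I^2,\C)$ onto itself whose inverse is also continuous, i.e.\ an element of $\mathop{\rm Aut}(\mathcal{C}^\infty(I^2,\C))$. First I would record that by Lemma~\ref{intern} (and its natural extension to elements of $\Cb \supseteq \D$ acting on $\Cc$, as used throughout Section~\ref{StarDef}), for any $d \in \D$ the map $L_d : \varphi \mapsto d \star \varphi$ sends $\mathcal{C}^\infty(I^2,\C)$ into itself; linearity in $\varphi$ is immediate from linearity of the integral. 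So $L_d$ is a well-defined linear endomorphism of $\mathcal{C}^\infty(I^2,\C)$ for every $d \in \D$, and the key observation is that $L_d \circ L_e = L_{d \star e}$ by associativity of $\star$ (Theorem~\ref{WellDefined}) and that $L_\delta = \mathrm{Id}$ by Corollary~\ref{DinCb}. Hence if $d$ is $\star$-invertible with inverse $d^{\star -1} \in \D$, then $L_{d^{\star -1}}$ is a two-sided inverse of $L_d$, so $L_d$ is a linear bijection of $\mathcal{C}^\infty(I^2,\C)$.

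Next I would establish continuity of $L_d$ with respect to the Fr\'echet topology on $\mathcal{C}^\infty(I^2,\C)$ (the one induced by the seminorms $\tilde p_k$ of Section~\ref{FrechetD}, restricted to two variables). Since $\mathcal{C}^\infty(I^2,\C)$ is a Fr\'echet space, it suffices to bound each seminorm $\tilde p_k(d \star \varphi)$ by a finite linear combination of seminorms $\tilde p_{m}(\varphi)$. Writing $d = \sum_{i=-1}^{+\infty} \tilde d_i \delta^{(i)}$, the action $d \star \varphi$ is, by the formulas underlying Corollary~\ref{DinCb} (the Heaviside term integrates $\varphi$, each $\delta^{(i)}$ term differentiates $\varphi$ in its first argument $i$ times and multiplies by $\tilde d_i$), a \emph{finite} sum when restricted to producing derivatives up to order $\leq k$ of a fixed order-$k'$ truncation — more carefully, because $d \in \D$ one works with the truncations $d|_m$ which are finite sums, and the convergence $d|_m \to d$ in $\D$ together with the convergence structure on $\Cc$ transfers the bound. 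Differentiating $\tilde d_i \partial_x^i \varphi$ and $\tilde d_{-1}\int \varphi$ under the integral and applying the Leibniz rule gives an estimate of the shape $\tilde p_k(d \star \varphi) \leq C_{d,k}\, \tilde p_{k+N}(\varphi)$ for a constant depending only on finitely many seminorms of the coefficients $\tilde d_i$ and on $|I|$; this is exactly continuity. Applying the same estimate to $d^{\star -1} \in \D$ shows $L_d^{-1} = L_{d^{\star -1}}$ is continuous as well, so $L_d \in \mathop{\rm Aut}(\mathcal{C}^\infty(I^2,\C))$, and the map $d \mapsto L_d$ realizes the claimed inclusion.

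There is a subtlety to address: the statement asserts an inclusion of \emph{sets}, so one should make sure the identification $d \leftrightarrow L_d$ is injective — but this follows from totality of $\Cc$ with respect to $\star$ (Lemma~\ref{lemTOT}), which gives $L_d = L_e \Rightarrow d = e$. I expect the main obstacle to be the continuity estimate when $d$ has infinitely many nonzero coefficients $\tilde d_i$: one must argue that, although $d \star \varphi$ formally involves all derivatives $\partial_x^i \varphi$, only finitely many contribute to any given seminorm $\tilde p_k$ on a compact $K \subset \Omega$, or else that the seminorms $p_m(d)$ of Section~\ref{FrechetD} control the tail uniformly. The cleanest route is probably to prove the estimate first for finite-order $d \in \D^{(k')}$, where $d\star\varphi$ is visibly a differential-plus-integral operator of order $k'{+}1$ with smooth coefficients, and then pass to the limit using that $d|_{k'} \to d$ in $\D$ and that $L$ is (sequentially) continuous for the relevant convergence structures; invertibility of the limit is supplied directly by the hypothesis $d \in \mathop{\rm Inv}(\D)$, so no limiting argument is needed on the inverse side. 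Finally I would remark that this realization is compatible with the Fr\'echet Lie group structure on $\mathop{\rm Inv}(\D)$ established above, so that $\mathop{\rm Inv}(\D)$ embeds as a subgroup of $\mathop{\rm Aut}(\mathcal{C}^\infty(I^2,\C))$.
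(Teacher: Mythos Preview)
Your algebraic core --- that $L_d:\varphi\mapsto d\star\varphi$ is a linear endomorphism of $\Cc$, that $L_d\circ L_e=L_{d\star e}$ by associativity, that $L_\delta=\mathrm{Id}$, and hence that $L_d$ is a linear bijection whenever $d\in\mathop{\rm Inv}(\D)$ --- is exactly the argument the paper gives, though the paper phrases it as ``$\star$ induces a group action of $(\mathop{\rm Inv}(\D),\star)$ on $(\Cc,+)$ which is additive in $\varphi$'' and stops there in three lines. In particular, the paper interprets $\mathop{\rm Aut}(\mathcal{C}^\infty(I^2,\C))$ as \emph{linear} (equivalently, additive-group) automorphisms and makes no claim about continuity in the Fr\'echet topology; your injectivity remark via Lemma~\ref{lemTOT} is also absent from the paper's proof but is a correct and worthwhile observation.

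Everything you write about continuity therefore goes beyond what the theorem, as the paper understands it, asserts. That extra material is not wrong in spirit, but the obstacle you yourself flag is genuine: for $d=\sum_{i\ge -1}\tilde d_i\,\delta^{(i)}$ with infinitely many nonzero $\tilde d_i$, the map $L_d$ involves derivatives of $\varphi$ of unbounded order, so an estimate of the form $\tilde p_k(d\star\varphi)\le C_{d,k}\,\tilde p_{k+N}(\varphi)$ with $N$ depending only on $k$ need not hold without further hypotheses on the decay of the $\tilde d_i$. Passing to truncations $d|_{k'}$ and taking limits does not by itself produce a bound with a \emph{fixed} finite $N$, which is what Fr\'echet continuity requires. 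Since the paper never claims topological automorphisms, this gap is harmless for matching the paper's statement; if you want the stronger conclusion you would need to restrict to finite-order $d$ or impose growth conditions on the coefficients.
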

 \begin{proof}
The $\star$~product induces a group action of the group $(\mathop{\rm Inv}(\D),\star)$ on $(\Cc,+)$, which in particular satisfies 
\[
\forall d \in \mathop{\rm Inv}(\D),\,\, \forall \varphi, \psi \in \Cc,\quad d \star (\varphi + \psi)= d \star \varphi + d \star \psi.
\]
Consequently, for all $d \in \mathop{\rm Inv}(\D)$ the map $\varphi \mapsto d \star \varphi$ is an automorphism of $\Cc$ and $\mathop{\rm Inv}(\mathcal{D})\subseteq \mathop{\rm Aut}(\mathcal{C}^{\infty}(I^2,\C))$.
 \end{proof}
The results so far lead to the following theorem.
 \begin{thm}
    $(\mathop{\rm Inv}(\mathcal{D}), \star)$ is a Fr\'{e}chet Lie group. 
\end{thm}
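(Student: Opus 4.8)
The plan is to recognise $(\mathop{\rm Inv}(\D),\star)$ as the group of units of the unital Fr\'echet algebra $(\D,\star)$ --- a unital algebra over $\mathbb{C}$ by Corollary~\ref{algebraD}, a Fr\'echet space by the preceding theorem --- and to equip it with the Fr\'echet Lie group structure that the group of units of such an algebra automatically carries once the algebra is a \emph{continuous inverse algebra}, i.e. once $\mathop{\rm Inv}(\D)$ is open in $\D$ and inversion $\iota\colon d\mapsto d^{\star-1}$ is continuous on it; this last implication is a standard mechanism in infinite-dimensional Lie theory. So I would verify, in order: (a) $\star$ is jointly continuous on $\D\times\D$; (b) $\mathop{\rm Inv}(\D)$ is open in $\D$, which then furnishes a global chart and the Fr\'echet manifold structure; (c) multiplication and inversion are smooth. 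Step (a) is where the seminorms of Section~\ref{FrechetD} meet the explicit identities \eqref{leftaction}--\eqref{rightaction}: on the dense subalgebra of finite-order elements, the $\delta^{(m)}$-coefficient of $d\star e$ with $m\le k$ is, by the Leibniz rule and those identities, a finite sum of products of derivatives of the $\tilde d_i$ and $\tilde e_j$ whose orders are controlled by $k$ and by the orders of $d$ and $e$; bounding these uniformly on the compact $K_{k+1}$ gives an estimate $p_k(d\star e)\le C\,p_{k'}(d)\,p_{k'}(e)$ for a suitable $k'$, and continuity then extends to all of $\D$ by density. Since a continuous bilinear map of Fr\'echet spaces is smooth, multiplication on $\D$, hence its restriction $\mathop{\rm Inv}(\D)\times\mathop{\rm Inv}(\D)\to\mathop{\rm Inv}(\D)$, is smooth.

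For (b), and for the continuity of $\iota$, I would lean on the constructions of Section~\ref{InvSection}. There is no Neumann series here, so one cannot argue as in a Banach algebra; instead I would use the order filtration $\D=\bigcup_k\D^{(k)}$ and the reduction $d\mapsto e:=d\star\Theta^{\star(k+1)}=\tilde e(x,y)\Theta$, which recasts the invertibility of $d\in\D^{(k)}$ as that of a $\Theta$-type element and gives $d^{\star-1}=\Theta^{\star(k+1)}\star e^{\star-1}$. For $\Theta$-type elements Lemma~\ref{SmThetaInv} and the generic explicit formula of Giscard and Pozza for $e^{\star-1}$ apply, and those formulas depend on the smooth coefficients only through rational operations and iterated integrals, hence continuously and uniformly on compacta --- which yields continuity of $\iota$ on the open locus of genuinely invertible elements, the ``finite set of isolated points'' of Lemma~\ref{SmThetaInv} being disposed of by perturbing within the dense set of invertibles provided by the density theorem above. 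Smoothness of $\iota$ then follows by the customary bootstrap: on $\mathop{\rm Inv}(\D)$ one has $D\iota(d)h=-\,d^{\star-1}\star h\star d^{\star-1}$, an expression built from the smooth map $\star$ and the continuous map $\iota$ and hence continuous in $(d,h)$; differentiating it repeatedly produces only further compositions of $\star$ and $\iota$, so every derivative of $\iota$ exists and is continuous, i.e. $\iota$ is smooth. Combined with (a) this gives the Fr\'echet Lie group, and Theorem~\ref{AutThm} realises it, through the smooth action $d\mapsto(\varphi\mapsto d\star\varphi)$, as a subgroup of $\mathop{\rm Aut}(\mathcal{C}^\infty(I^2))$ --- the ``continuum'' Borel subgroup announced earlier.

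The step I expect to be the main obstacle is precisely (b) together with the continuity of $\iota$: in this non-Banach setting neither the openness of the unit group nor the continuity of inversion is formal, and both have to be extracted from the explicit description of $\star$-invertible elements rather than from soft arguments. The delicate points are controlling the dependence of $e^{\star-1}$ on the coefficients uniformly on the compacta $K_{k+1}$ underlying the seminorms $p_k$, and handling the isolated singular points of Lemma~\ref{SmThetaInv} --- either by excluding them and working on an open dense subset, or by showing that they do not obstruct invertibility of the relevant elements. Should openness in the ambient Fr\'echet topology fail, the fallback would be to endow $\mathop{\rm Inv}(\D)$ with the coarsest topology making both the inclusion into $\D$ and $\iota$ continuous and to build charts near $\delta$ from the $\star$-exponential $\exp_\star(d)=\sum_{n\ge0}d^{\star n}/n!$ on a suitable subspace of $\D$; but the direct continuous-inverse-algebra route is the one I would attempt first.
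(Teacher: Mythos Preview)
Your route is genuinely different from the paper's, and the divergence hinges on one estimate. The paper's entire proof consists of showing
\[
p_k(d\star e)\le c_k\,p_k(d)\,p_k(e)\qquad\text{for all }k,
\]
with the \emph{same} index $k$ on both sides (they obtain $c_k=3^{k+1}+|K_{k+1}|$ by computing $\partial^\alpha$ of $(\tilde d_p\delta^{(p)})\star(\tilde e_m\delta^{(m)})$ directly from the identities of \S\ref{MultIdent}), and then invoking the Banach-algebra analogue in \cite{bonsall} as sufficient. They do not discuss openness of $\mathop{\rm Inv}(\D)$, continuity of inversion, or charts at all.

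Your estimate $p_k(d\star e)\le C\,p_{k'}(d)\,p_{k'}(e)$ with a possibly larger $k'$ is weaker: it gives joint continuity but not local $m$-convexity, and it is precisely this weakening that forces you into the long detour through the Giscard--Pozza inversion formulae and the order filtration. Your statement ``there is no Neumann series here'' is where the two approaches part ways: with the paper's same-index inequality, each seminorm $p_k$ is submultiplicative up to a constant, so the Neumann series for $(\delta-d)^{\star-1}$ converges in every $p_k$ once $c_k\,p_k(d)<1$, and the Banach-algebra mechanism does carry over seminorm-by-seminorm. What your approach buys is explicitness about the manifold and group structure (openness, smooth inversion via $D\iota(d)h=-d^{\star-1}\star h\star d^{\star-1}$), whereas the paper trades that for brevity by appealing to a black-box analogy with the Banach case; your acknowledged obstacle (b) is exactly the step the paper absorbs into that citation.
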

\begin{proof}
    Since $\mathcal{D}$ is a Fr\'{e}chet space, it is sufficient (similarly to Banach Lie groups \cite{bonsall}) to show that 
    $$
    \forall d, e \in \mathcal{D},\,\, \forall k \in \N \cup \{-1\},\quad p_k(d \star e) \leq c_k \ p_k(d) \ p_k(e)
    $$
    with $c_k\in\mathbb{R}^+$ finite for $k$ finite.
    As the $\star$~product is linear and thanks to the triangular inequality, we need only show this for $d=\tilde{d}_p \delta^{(p)}$ and $e=\tilde{e}_m \delta^{(m)}$ with $m,p \in \N \cup \{-1\}$. 
     Firstly, we consider the particular case $p=m=-1$. Recall that $\delta^{(-1)}\equiv \Theta$. Let $k \in \N \,\cup\, \{-1\}$, we consider $(x,y) \in K_{k}^2$. Let $d(x,y)=\tilde{d}_{-1}(x,y) \Theta(x-y)$ and $e(x,y)=\tilde{e}_{-1}(x,y) \Theta(x-y)$. We have
    $$
    (d \star e) (x,y) = \int_y^x \tilde{d}_{-1}(x,\tau) \tilde{e}_{-1}(\tau,y) d\tau \Theta(x-y).
    $$
    Then, for $\alpha=(\alpha_1, \alpha_2) \in \N^2$, $|\alpha|=k+1$ 
    \begin{align*}
&\partial^{\alpha} \left(\int_{y}^x \tilde{d}_{-1}(x,\tau)\tilde{e}_{-1}(\tau,y) d\tau\right)=\\
&\hspace{5mm}\sum _{k=0}^{\alpha_1} \sum _{l=0}^{\alpha_1-1} \binom{\alpha_1}{l} \binom{\alpha_1-l-1}{k}
   \tilde{d}_{-1}^{(l,k)}(x,x) \tilde{e}_{-1}^{(\alpha_1-k-l-1,\alpha_2)}(x,y)\\
   &\hspace{10mm}-\sum _{k=0}^{\alpha_2} \sum _{l=0}^{\alpha_2-1} \binom{\alpha_2}{l} \binom{\alpha_2-l-1}{k}  \tilde{d}_{-1}^{(\alpha_1,\alpha_2-k-l-1)}(x,y)\tilde{e}_{-1}^{(k,l)}(y,y)\\
   &\hspace{17mm}+\int_y^x \tilde{d}_{-1}^{(\alpha_1,0)}(x,\tau) \tilde{e}_{-1}^{(0,\alpha_2)}(\tau,y) \, d\tau.
\end{align*}
Then we use $p_k(d)$ and $p_k(e)$ as upper bounds for the derivatives of $d$ and $e$, respectively. This leads to
\begin{align*}
p_k(d\star e)& = \text{sup}_{|\alpha|\leq k+1} \left\Vert \partial^{\alpha} \left( \int_y^x \tilde{d}_{-1}(x,\tau) \tilde{e}_{-1}(\tau,y) d\tau \right) \right\Vert_{\infty,K_{k+1}} \\
& \leq (3^{\max(\alpha_1,\alpha_2)} +|K_{k+1}|) p_k(d) p_k(e) \\
& \leq (3^{k+1} +|K_{k+1}|) p_k(d) p_k(e).
\end{align*}
Secondly, suppose without loss of generality that $m\neq -1$. Let $k \in \N \cup \{-1\}$ be a fixed integer, $K_{k+1}$ the corresponding compact as defined earlier; $d(x,y):=\tilde{d}_{p}(x,y) \delta^{(p)}(x-y)$ and $e(x,y):=\tilde{e}_{m}(x,y) \delta^{(m)}(x-y)$, $m\in \N$, $p \in \N \cup \{-1\}$. For $(x,y) \in K_{k+1}^2$, we have
\begin{align*} \tiny
 (d \star e)(x,y)&= \int_{-\infty}^{+\infty} \tilde{d}_{p}(x,\tau) \delta^{(p)}(x-\tau) \tilde{e}_m(\tau,y) \delta^{(m)}(\tau-y) d\tau \\
 &= (-1)^m \partial^{(0,m)} \left( \tilde{d}_{p}(x,y) \tilde{e}_m(y,y) \delta^{(p)}(x-y) \right)\\
 &= (-1)^m \sum_{j \leq m} \binom{m}{j} \partial^{(0,m-j)} \left(  \tilde{d}_{p}(x,y) \tilde{e}_m(y,y) \right) \delta^{(p+j)}(x-y).
\end{align*}
Then,
\begin{align*}
    p_k(d \star e)&= \sup_{-1 \leq p+j \leq k} \sup_{|\alpha| \leq k+1} \left \Vert \partial^{\alpha} \,(-1)^m\binom{m}{j} \partial^{(0,m-j)} \left(  \tilde{d}_{p}(x,y) \tilde{e}_m(y,y) \right) \right \Vert_{\infty,K_{k+1}}.
\end{align*}
Observe that we consider $p+m\leq k$ otherwise $p_k(d \star e) = 0$ trivially. Then $m \leq k-p$ and since $p \in \N \cup \{-1\}$, we get $m \leq k+1$. Consequently,
\begin{align*}
    p_k(d \star e)& \leq \sup_{-1 \leq j \leq k+1} \binom{m}{j} \sup_{|\alpha| \leq k+1} \left \Vert \partial^{(\alpha_1,\alpha_2+m-j)} \left(  \tilde{d}_{p}(x,y) \tilde{e}_m(y,y) \right) \right \Vert_{\infty,K_{k+1}} \\
    & \leq \sup_{-1 \leq j \leq k+1} \binom{m}{j} 2^{k+m-j} p_k(d) p_k(e) \\
    & \leq 3^{k+1} p_k(d) p_k(e).
\end{align*} 
\end{proof}

\begin{acknow}
M. R. and P. L. G. are funded by the French National Research Agency projects \textsc{Magica} ANR-20-CE29-0007 and \textsc{Alcohol} ANR-19-CE40-0006. We thank C. Miebach for his enlightening suggestions regarding Fr\'{e}chet Lie groups. We are grateful to the anonymous referee for his/her detailed remarks which have greatly helped improving this work.
\end{acknow}

%\bibliographystyle{plain}
%\bibliography{BiblioThese}

\end{document}